\definecolor{refkey}{gray}{.75}
\definecolor{labelkey}{gray}{.75}
\newcommand{\Z}{\mathbb Z}
\newcommand{\N}{\mathbb N}
\newcommand{\Prob}{\mathbb P}
\newcommand{\cH}{\mathcal{H}}
\newcommand{\pr}{\mathbb P}
\newtheorem{teo}{Theorem}[section]
\newtheorem{cor}[teo]{Corollary}
\newtheorem{rem}[teo]{Remark}
\newtheorem{defn}[teo]{Definition}
\newtheorem{exmp}[teo]{Example}
\newtheorem{assump}[teo]{Assumption}
\title
{A generating function approach \\ to branching random walks}
\author[D.~Bertacchi]{Daniela Bertacchi}
\address{D.~Bertacchi, Dipartimento di Matematica e Applicazioni,
Universit\`a di Milano--Bicocca,
via Cozzi 53, 20125 Milano, Italy.}
\email{daniela.bertacchi\@@unimib.it}
\author[F.~Zucca]{Fabio Zucca}
\address{F.~Zucca, Dipartimento di Matematica,
Politecnico di Milano,
Piazza Leonardo da Vinci 32, 20133 Milano, Italy.}
\email{fabio.zucca\@@polimi.it}
\date{}
\begin{document}

\begin{abstract}
It is well known that the behaviour of a branching process is completely described by the generating function 
of the offspring law and its fixed points.
Branching random walks are a natural generalization of branching processes: a branching process can be seen
as a one-dimensional branching random walk.
We define a
multidimensional generating function associated to a given branching random walk.
The present paper investigates the similarities and the differences of the generating functions,
their fixed points and the implications on the underlying stochastic process, between the one-dimensional (branching process)
and the multidimensional case (branching random walk).
In particular, we show that the generating function of a branching random walk can have
uncountably many fixed points and a fixed point may not be an
extinction probability, even in the irreducible case (extinction probabilities are always fixed points). 
Moreover, the generating function might not be a convex function.
We also study how the behaviour of a branching random walk is affected by local modifications of the process.
As a corollary, we describe a general procedure with which we can modify a continuous-time branching random walk which has a weak
phase and turn it into a continuous-time branching random walk which has strong local survival for large or small values
of the parameter and non-strong local survival for intermediate values of the parameter.
\end{abstract}


\maketitle
\noindent {\bf Keywords}: branching random walk, branching process, strong local survival, generating function, fixed point, extinction probability.

\noindent {\bf AMS subject classification}: 60J05, 60J80.

\section{Introduction}
\label{sec:intro}

A branching process, or Galton-Watson process (see \cite{cf:GW1875}), is a process where a particle dies and 
gives birth to a random number of offspring,
according to a given offspring law $\rho$ ($\rho(n)$ being the probability of having exactly $n$ children).
Different particles breed independently, all according to $\rho$. Unless $\rho(0)=0$, it is not
completely trivial to tell whether the process
survives (with positive probability) or it goes extinct (almost surely). This question can be answered by looking at the fixed points
of the generating function $H(z)=\sum_{n=0}^\infty\rho(n)z^n$, which is defined for $z\in[0,1]$.
There is almost sure extinction if and only if $H$ has only the fixed point $z=1$.
If there are two fixed points, namely $z=1$ and $z=\bar q\in(0,1)$, then there is extinction with probability $\bar q$
and survival with probability $1-\bar q$. If we require that $\rho(1) <1$ then the generating function $H$, being monotone
and convex, has at most two fixed points, so this description settles all the possibilities for the branching process.

A branching random walk (BRW hereafter) is a process where particles are described by their location $x\in X$,
where $X$ is an at most countable set ($X$ is usually interpreted as a spatial variable, but can also be seen
as a ``type'', see for instance \cite{cf:KurtzLyons}). Particles at site $x\in X$ are replaced by a random
number of children, which are placed at various locations on $X$. 
This class of processes (in continuous and
discrete time) has been studied by many authors (see \cite{cf:AthNey, cf:Big1977, cf:Harris63,
cf:Ligg1, cf:MachadoMenshikovPopov, cf:MP00, cf:MP03,  cf:MadrasSchi, cf:MountSchi} just to mention a few);
a survey on the subject can be found in \cite{cf:BZ4}. Note that, in the case of the branching random walk,
there is no upper bound for the number of particles per site. When such an upper bound is fixed, say at most
$m$ particles per site, we get the $m$-type contact process. The branching random walk can be obtained
as the limiting process as $m$ goes to infinity (\cite{cf:BZ3,cf:BZ15,cf:Z1}).

The behaviour of a BRW is in general more complex than the one of a branching process: 
if we start with one particle at a given site $x$, only one of the following holds for the BRW:
(1) it goes almost surely extinct, (2) it survives globally but not locally,
(3) it survives globally and locally but with different probabilities (non-strong local survival),
(4) it survives globally and locally with equal probability (strong local survival).
We stress that there is no strong local survival when either there is non-strong local survival or 
almost sure local extinction. 

Again, some answers can be obtained through the study of the multidimensional generating function $G$, defined
on $[0,1]^X$, associated to the process. It is easy to note that all the extinction probabilities are fixed points of $G$,
therefore if one proves that there is only one fixed point then there is almost sure extinction
({the vector $\mathbf{1}$, defined as $\mathbf{1}(x):=1$ for all $x\in X$, is always a fixed point}).
If there are at least two fixed points 
then there is global survival starting from some vertices and the extinction probability starting from $x$ coincides
with $\bar {\mathbf{q}}(x)$, where $\bar{\mathbf{q}}$ is the smallest fixed point
(see \cite[Corollary 2.2 and Section 3]{cf:BZ2}).
For a long time, it has been believed (see \cite[Theorem 3]{cf:Spataru89}) that, in the irreducible case, no more 
 than two fixed points were possible. This was disproved in \cite{cf:BZ14-SLS}, even though it remains true for irreducible
 BRWs on finite sets (see also \cite[Corollary 3.1]{cf:BZ14-SLS}). In this framework, two questions naturally arise: 
 how many fixed points can the generating function of an irreducible BRW have? 
 At least in the irreducible case, are all fixed points also extinction probabilities?  
Section~\ref{sec:Q&A} provides a negative answer to these questions: Examples~\ref{exmp:reduciblemorefixedpoints}
and \ref{exmp:irreduciblemorefixedpoints} are a reducible and an irreducible BRWs respectively, where 
there are only two extinction probabilities but the set of fixed points is uncountable. 
We also show that the topological properties of the multidimensional $G$ are different from the one-dimensional case: $G$ may not be
convex (Example~\ref{exmp:counterexample1}). Moreover, the set $U_G:=\{\mathbf{z} \in [0,1]^X \colon G(\mathbf{z})\le 
\mathbf{z}\}$ is not {necessarily} convex and its extremal points may be
neither fixed points nor extremal points of $[0,1]^X$ (Examples~\ref{exmp:counterexample2} and \ref{exmp:counterexample3}).

Since extinction probabilities are fixed points of $G$, it is clear that if we know that $G$ has only two fixed
points and that the BRW survives locally, then there is strong local survival. Conversely, if there is non-strong
local survival, then there must be at least three fixed points. Somehow related is the question of what happens if 
we modify locally (that is, on a fixed $A\subseteq X$) a given BRW: for instance if the original BRW 
has no strong local survival, what can be said about the modified BRW? 
Theorem~\ref{th:modifiedBRW}
shows that there is global survival and no strong local survival in $A$ in the original BRW if and only if there is global survival and
no strong local survival in $A$ 
in the modified BRW (regardless of the modifications that have been introduced in $A$).
As a corollary we get that if the original BRW dies out locally in $A$ and the modified BRW survives globally,
then almost sure global extinction for the original one is equivalent to strong local survival in $A$ for the modified BRW 
(Corollary~\ref{cor:pureweak-nonstrong-discrete}).
Moreover, for a fixed irreducible BRW, if 
{there is global survival and no strong local survival in some $A \subseteq X$ then there is global survival and no
strong local survival in all finite $B\subseteq X$}.

From these results in discrete time, we are able to prove that, in continuous time, a modification
of the BRW in a finite subset $A$, which lowers the weak critical parameter (something that can usually
be achieved by adding a sufficiently fast reproduction rate at some site), implies that the weak and strong
parameter of the modified BRW coincide. This allows us to describe a general method to produce examples
such as Example~\ref{th:nonstrongandstrong}, where the modified BRW has strong local survival for
some values of the parameter below a threshold and above another threshold, but non-strong local survival
for intermediate values of the parameter (Figures~\ref{fig:t3} and \ref{fig:t3modified}).
This example was originally described in \cite{cf:BZ14-SLS} but appears here with an easier proof and in
a more general framework. Moreover, we prove that in general, a continuous-time BRW which is obtained by
a local modification of another BRW, lowering its weak critical parameter, dies out globally at the weak
critical parameter (which is not always true, see \cite[Example 3]{cf:BZ2}).

Here is the outline of the paper: in Section~\ref{sec:basic} we introduce the terminology, describe
the most common types of BRWs and their features and define the multidimensional generating function associated to a BRW.
Section~\ref{sec:Q&A} is devoted to the questions about the generating function, its fixed points and the extinction
probabilities. In Section~\ref{sec:survivalprob} we address the problem of the possible behaviour of modified BRWs.
Section~\ref{sec:proofs} contains the proofs of the results and the detailed computations for the examples.

\section{Basic definitions and preliminaries}
\label{sec:basic}

The most general example of a BRW lives in discrete time and it can be constructed easily
as a process $\{\eta_n\}_{n \in \N}$
on a set $X$ which is
at most countable, where $\eta_n(x)$ is the number of particles alive
at $x \in X$ at time $n$. 
The dynamics is described as follows:
consider the (countable) measurable space $(S_X,2^{S_X})$
where $S_X:=\{f:X \to \N\colon \sum_yf(y)<\infty\}$ and let 
$\mu=\{\mu_x\}_{x \in X}$ be a family of probability measures
on $(S_X,2^{S_X})$.
A particle of generation $n$ at site $x\in X$ lives one unit of time;
after that, a function $f \in S_X$ is chosen at random according to the law $\mu_x$.
This function describes the number of children and their positions, that is,
the original particle is replaced by $f(y)$ particles at
$y$, for all $y \in X$. The choice of $f$ is independent for all breeding particles.
The BRW is denoted by $(X,\mu)$.
The total number of children associated to $f$ is represented by the
function $\cH:S_X \rightarrow \N$ defined by $\cH(f):=\sum_{y \in X} f(y)$;
the associated law $\rho_x(\cdot):=\mu_x(\cH^{-1}(\cdot))$ is the law of the random number of children
of a particle living at $x$.

Some results rely on the \textit{first-moment matrix}
$M=(m_{xy})_{x,y \in X}$,
where each entry
$m_{xy}:=\sum_{f\in S_X} f(y)\mu_x(f)$ represents
the expected number of children that a particle living
at $x$ sends to $y$
(briefly, the expected number of particles from $x$ to $y$).
 For the sake of simplicity, we require
that $\sup_{x \in X} \sum_{y \in X} m_{xy}<+\infty$.
We denote by $\bar \rho_x:=\sum_{n \ge 0} n \rho_x(n) \equiv \sum_{y \in X} m_{xy}$,
which is the expected number of children of a particle living at $x$.
Given a function $f$ defined on $X$ we denote by $Mf$ the function $Mf(x):=\sum_{y \in X} m_{xy}f(y)$
whenever the right-hand side converges absolutely for all $x$.

If we observe the process at times $i\cdot n$ ($i \in \mathbb{N}$) we obtain a new BRW
whose first-moment matrix is the $n$th power matrix $M^n$ with entries $m^{(n)}_{xy}$.
We define
\begin{equation}\label{eq:generalgeomparam}
M_s(x,y) := \limsup_{n \to \infty} \sqrt[n]{m_{xy}^{(n)}}, \quad
M_w(x) := \liminf_{n \to \infty} \sqrt[n]{\sum_{y \in X} m_{xy}^{(n)}}, \qquad \forall x,y \in X;
\end{equation}
see \cite{cf:BZ, cf:BZ2} for some explicit computations and 
\cite[Section 3.2]{cf:Z1} for the relation between $M_s(x,x)$
and some generating functions.

It is important to note that, for a generic BRW, 
the locations of the offsprings are not chosen independently but they are assigned by the chosen
function $f\in S_X$.
We denote by  $P$ the \textit{diffusion matrix} with entries $p(x,y)=m_{xy}/\bar \rho_x$.
In particular, if $\bar \rho_x$ does not depend on $x \in X$, we have that $M_w(x)=\bar \rho$ for all $x \in X$ and
$M_s(x,y)=\bar \rho \cdot \limsup_{n \to \infty} \sqrt[n]{p^{(n)}(x,y)}$ (where the $\limsup$ defines
the \textit{spectral radius} of $P$ according to \cite[Chapter I, Section 1.B]{cf:Woess}). 
When the offsprings are dispersed independently, they are placed according to $P$
and the process is called 
\textit{BRW with independent diffusion}:
in this case
\begin{equation}\label{eq:particular1}
\mu_x(f)=\rho_x \left (\sum_y f(y) \right )\frac{(\sum_y f(y))!}{\prod_y f(y)!} \prod_y p(x,y)^{f(y)},
\quad \forall f \in S_X.
\end{equation}

To  a generic discrete-time BRW we associate a graph $(X,E_\mu)$ where $(x,y) \in E_\mu$  
if and only if $m_{xy}>0$.
We say that there is a path from $x$ to $y$, and we write $x \to y$, if it is
possible to find a finite sequence $\{x_i\}_{i=0}^n$ (where $n \in \N$)
such that $x_0=x$, $x_n=y$ and $(x_i,x_{i+1}) \in E_\mu$
for all $i=0, \ldots, n-1$ (observe that there is always a path of length $0$ from $x$ to itself).
Whenever $x \to y$ and $y \to x$ we write $x \rightleftharpoons y$.
 The equivalence relation $\rightleftharpoons$ induces a partition of $X$: the
class $[x]$ of $x$ is called \textit{irreducible class of $x$}.
It is easy to show that if $x \rightleftharpoons x^\prime$ and $y \rightleftharpoons y^\prime$ then
$M_s(x,y)=M_s(x^\prime,y^\prime)$ and $M_w(x)=M_w(x^\prime)$. Moreover, 
$m^{(n)}_{xx}$ and $M_s(x,x)$ depend only on the entries $(m_{ww^\prime})_{w,w^\prime \in [x]}$. 
If the graph $(X,E_\mu)$ is \textit{connected} (that is, there is only one irreducible class)
then we say that the first-moment matrix $M$ 
is \textit{irreducible},  
otherwise we call it \textit{reducible}; the same notation applies to the BRW.
The irreducibility of $M$ implies that the progeny of any particle can spread to any site of the 
graph. For an irreducible BRW, $M_s(x,y)=M_s$ and $M_w(x)=M_w$ for all $x,y \in X$.

%

We consider initial configurations with only one particle placed at a fixed site $x$ and we
denote by $\pr^{\delta_{x}}$ the law of the corresponding process. The evolution of the process
with more complex initial conditions can be obtained by superimposition.
In the following, \textit{wpp}
is shorthand for ``with positive probability'' (although, when talking about survival, 
``wpp'' will be usually tacitly understood).
In order to avoid trivial situations where particles have one offspring almost surely, we assume
henceforth the following.
\begin{assump}\label{assump:1}
For all $x \in X$ there is a vertex $y \rightleftharpoons x$ such that
$\mu_y(f\colon  \sum_{w\colon w \rightleftharpoons y} f(w)=1)<1$,
 that is, in every equivalence class (with respect to $\rightleftharpoons$)
there is at least one vertex where a particle
can have inside the class a number of children different from 1 wpp.
\end{assump}
We now distinguish between the possible behaviours of a BRW.
\begin{defn}\label{def:survival} $\ $
\begin{enumerate}
 \item 
The process \textsl{survives locally wpp} in $A \subseteq X$ starting from $x \in X$ if
$
{\mathbf{q}}(x,A)
:=1-\pr^{\delta_x}(\limsup_{n \to \infty} \sum_{y \in A} \eta_n(y)>0)<1.
$
\item
The process \textsl{survives globally wpp} starting from $x$ if
$
\bar {\mathbf{q}}(x)
:={\mathbf{q}}(x,X) 
<1$.
\item
There is \textsl{strong local survival wpp} in $A \subseteq X$ starting from $x \in X$
if
$ 
{\mathbf{q}}(x,A)=\bar {\mathbf{q}}(x)<1
$ 
and \textsl{non-strong local survival wpp} in $A$ if $\bar {\mathbf{q}}(x)<{\mathbf{q}}(x,A)<1$.
\item
The BRW is in a \textit{pure global survival phase} starting from $x$ if
$ 
\bar {\mathbf{q}}(x)<{\mathbf{q}}(x,x)=1
$ 
(where we write ${\mathbf{q}}(x,y)$ instead of ${\mathbf{q}}(x, \{y\})$ for all $x,y \in X$).
\end{enumerate}
\end{defn}
\noindent
According to the previous definition,
the probabilities of extinction in $A$ starting from $x$ are denoted by
${\mathbf{q}}(x,A)$, which depend on $\mu$. When we need to stress this dependence, we write 
$ {\mathbf{q}}^\mu(x,A)$.
When $x=y$ we will simply say that local survival occurs ``starting from $x$'' or ``at $x$'. 
When there is no survival wpp, we say that there is extinction
and the fact that extinction occurs 
almost surely
will be tacitly understood.
There are many relations between $\bar {\mathbf{q}}(x)$ and ${\mathbf{q}}(x,y)$ and between ${\mathbf{q}}(w,x)$ and
${\mathbf{q}}(w,y)$ where $x,y, w \in X$ (see for instance Section~\ref{sec:Q&A} or \cite{cf:BZ4, cf:Z1}).

Roughly speaking, strong local survival means that for almost all realizations the process either survives locally
(hence globally) or it goes globally extinct. More precisely,
there is strong survival at $y$ starting from $x$ if and only if the probability
of local survival at $y$ starting from $x$ conditioned on global survival starting from $x$ is $1$.

We want to stress that $\bar {\mathbf{q}}(x)={\mathbf{q}}(x,A)$ if and only if global survival from $x$ is
equivalent to strong local survival at $A$ from $x$. On the other hand $\bar {\mathbf{q}}(x)<{\mathbf{q}}(x,A)$
if and only if there is global survival and no strong local survival at $A$ from $x$ (that is, either local extinction
at $A$ or non-strong local survival at $A$).
Recall that no strong local survival in $A$ from $x$ means that either there is non-strong local survival in $A$ from $x$ 
or there is local extinction in $A$ from $x$.

\subsection{Continuous-time Branching Random Walks}
\label{subsec:continuous}

In continuous time each particle has an exponentially distributed
random lifetime with parameter 1 (death occurs at rate 1). During its lifetime each particle alive at $x$
breeds into $y$ according to the arrival times of its own Poisson process with
parameter $\lambda k_{xy}$ (representing the reproduction rate), 
where $\lambda>0$ and $K=(k_{xy})_{x,y \in X}$ is a nonnegative matrix. 
We denote by  $(X,K)$ the family of continuous-time BRWs (depending on $\lambda>0$).
It is not difficult to see that the introduction
of a nonconstant death rate $\{d(x)\}_{x \in X}$ does not represent a
significant generalization. Indeed,
 one can study 
a new BRW with death rate 1 and
reproduction rates $\{\lambda k_{xy}/d(x)\}_{x,y \in X}$; the two processes have the same behaviours in
terms of survival and extinction (\cite[Remark 2.1]{cf:BZ14-SLS}).

To show that the class of continuous-time BRWs is ''contained`` into the class of
discrete-time BRWs, we associate to a continuous-time BRW
a discrete-time counterpart which takes into account all the offsprings 
of a particle before it dies.
Thus, all results in discrete time concerning the probabilities of survival (local, strong local and global)
extend smoothly to the continuous time setting. Conversely, each example in continuous-time induces an analogous
example in discrete-time (just by using the discrete-time counterpart).
In particular, by definition, a continuous-time BRW has some property 
if and only if its discrete-time counterpart has it.
It is easy to show that $\mu_x$ satisfies 
equation~\eqref{eq:particular1}, where
\begin{equation}\label{eq:counterpart}
\rho_x(i)=\frac{1}{1+\lambda k(x)} \left ( \frac{\lambda k(x)}{1+\lambda k(x)} \right )^i, \qquad
p(x,y)=\frac{k_{xy}}{k(x)}, \qquad k(x):=\sum_{y \in X} k_{xy}.
\end{equation}
Clearly the discrete-time
counterpart is a BRW with independent diffusion satisfying Assumption~\ref{assump:1}. Moreover
$m_{xy}=\lambda k_{xy}$ and $\bar \rho_x=\lambda k(x)$.

Given $x \in X$, two critical parameters are associated to the
continuous-time BRW: the \textit{global} 
\textit{survival critical parameter} $\lambda_w(x)$ and the  \textit{local} 
 \textit{survival critical parameter} $\lambda_s(x)$ defined as
\[ 
\begin{split}
\lambda_w(x)&:=\inf \Big \{\lambda>0\colon \,
\pr^{\delta_{x}}\Big (\sum_{w \in X} \eta_t(w)>0, \forall t\Big) >0 \Big \},\\
\lambda_s(x)&:=
\inf\{\lambda>0\colon \,
\pr^{\delta_{x}} \big(\limsup_{t \to \infty} \eta_t(x)>0 \big) >0
\}.
  \end{split}
\] 
These values depend only on the irreducible class of $x$; in particular they are constant
if the BRW is irreducible.
The process is called
\textit{globally supercritical}, \textit{critical} or \textit{subcritical}
if $\lambda>\lambda_w$, $\lambda=\lambda_w$ or $\lambda<\lambda_w$;
an analogous definition is given for the local behaviour using $\lambda_s$ instead of $\lambda_w$.
Everytime  the
interval $(\lambda_w(x),\lambda_s(x))$ is not empty 
we say that there exists a \textit{pure global survival phase} starting from $x$.
No reasonable definition of a \textit{strong local survival critical parameter} is possible
(see \cite{cf:BZ14-SLS}).

Given a continuous-time BRW $(X,K)$, for all  $x,y \in X$, we define 
\[ 
K_s(x,y) := \frac{M_s(x,y)}\lambda\equiv\limsup_{n \to \infty} \sqrt[n]{k_{xy}^{(n)}}, \quad
K_w(x) := \frac{M_w(x)}\lambda\equiv\liminf_{n \to \infty} \sqrt[n]{\sum_{y \in X} k_{xy}^{(n)}}, 
\] 
where $M_s(x,y)$ and $M_w(x)$ are the corresponding parameters of the discrete-time counterpart.
$K_s(x,y)$ and $K_w(x)$ depend only on the equivalence classes of $x$ and $y$, hence
if the BRW is irreducible, then they do not depend on $x,y \in X$.

Among continuous-time BRWs, two classes are worth mentioning: \textit{site-breeding} BRWs (where $k(x)$ does not depend on $x \in X$)
and \textit{edge-breeding} BRWs (where $k_{xy} \in \N$,
typically in a multigraph this is the number of edges from $x$ to $y$). 

\subsection{Infinite-dimensional generating function}\label{subsec:genfun}

To the family $\{\mu_x\}_{x \in X}$, we associate a generating function $G:[0,1]^X \to [0,1]^X$,
which can be considered as an infinite dimensional power series. 
More precisely,
for all ${\mathbf{z}} \in [0,1]^X$, $G({\mathbf{z}}) \in [0,1]^X$ is defined as the following weighted sum of (finite) products
\[ 
G({\mathbf{z}}|x):= \sum_{f \in S_X} \mu_x(f) \prod_{y \in X} {\mathbf{z}}(y)^{f(y)},
\] 
where $G({\mathbf{z}}|x)$ is the $x$ coordinate of $G({\mathbf{z}})$.
Note that if we have a realization $\{\eta_n\}_{n \in \mathbb{N}}$ of the BRW then
$G({\mathbf{z}}|x)=\mathbb{E}[\prod_{y \in X} {\mathbf{z}}(y)^{\eta_1(y)} | \eta_0=\delta_x]$.

The family $\{\mu_x\}_{x \in X}$ is uniquely determined by $G$. Indeed, fix a finite $X_0 \subseteq X$ and $x \in X$.
For every $\mathbf{z}$ with support in $X_0$, we have  
$G({\mathbf{z}}|x)= \sum_{f \in S_{X_0}} \mu_x(f) \prod_{y \in X_0} {\mathbf{z}}(y)^{f(y)}$
which can be identified with a power series with several variables (defined on $[0,1]^{X_0}$).
Suppose that we have another generating function $ \overline G$ (associated to  $\{\overline \mu_x\}_{x \in X}$)
such that $G=\overline G$. In particular, $G({\mathbf{z}}|x)=\overline G({\mathbf{z}}|x)$ for every 
$\mathbf{z}$ with support in $X_0$. Thus $\mu_x(f)=\overline \mu_x(f)$ for all $f \in S_{X_0}$.
Since $S_X =\bigcup_{\{X_0 \subseteq X \colon X_0 \textrm{ finite}\}}S_{X_0}$ we have that
$\mu_x(f)=\overline \mu_x(f)$ for
all $f \in S_X$.

Note that $G$ is continuous with respect to the \textit{pointwise convergence topology} of $[0,1]^X$  and nondecreasing
with respect to the usual partial order of $[0,1]^X$ (see \cite[Sections 2 and 3]{cf:BZ2} for further details);
everytime we say that an element of $[0,1]^X$ is the smallest (resp.~largest) among a set of points $\mathcal{A}$, we are also
implying that it is comparable with every element of the specific set $\mathcal{A}$. We stress that $\mathbf{z} < \mathbf{w}$ means
$\mathbf{z}(x) \le \mathbf{w}(x)$ for all $x \in X$ and $\mathbf{z}(x_0) < \mathbf{w}(x_0)$ for some $x_0 \in X$.
Moreover, $G$ represents the 1-step reproductions; we denote by $G^{(n)}$ the generating function
associated to the $n$-step reproductions, which is inductively defined as $G^{(n+1)}({\mathbf{z}})=G^{(n)}(G({\mathbf{z}}))$,
where $G^{(0)}$ is the identity.
Extinction probabilities are fixed points
of $G$ and the smallest fixed point is $\bar {\mathbf{q}}$ (see Section~\ref{sec:Q&A} for details):
more generally, given a solution of $G(\mathbf{z}) \le \mathbf{z}$ then $\mathbf z \ge \bar {\mathbf{q}}$.

When $(X,\mu)$ is a BRW with independent diffusion, we can compute explicitly $G$: indeed
$G({\mathbf{z}}|x)=\sum_{n \in \N} \rho_x(n) (P{\mathbf{z}}(x))^n$
where $P{\mathbf{z}}(x)=\sum_{y \in X} p(x,y){\mathbf{z}}(y)$.
If, in particular, $\rho_x(n)=\frac{1}{1+\bar \rho_x} (\frac{\bar \rho_x}{1+\bar \rho_x} )^n$ 
(as in the discrete-time counterpart of a continuous-time BRW) then
the previous expression becomes $G({\mathbf{z}}|x)=(1+\bar \rho_x P(\mathbf{1}-\mathbf{z})(x))^{-1}$ 
or, in a more compact way, 
\begin{equation}\label{eq:Gcontinuous}
 G({\mathbf{z}})= \frac{\mathbf{1}}{\mathbf{1}+M(\mathbf{1}-{\mathbf{z}})}
\end{equation}
where $M$ is the first-moment matrix and $M \mathbf{v}(x)=\bar \rho_x P\mathbf{v}(x)$ 
(by definition of $P$).

\subsection{Projection of BRWs}
\label{subsec:projections}

We introduce the concept of projection of a BRW onto another one
(see also \cite{cf:BZ4, cf:BZ14-SLS} where this property is called
\textit{local isomorphism}).

\begin{defn}
\label{def:locallyisomorphic} 
A BRW $(X, \mu)$ is projected onto a BRW $(Y,\nu)$ if there exists a
surjective map $g:X\to Y$ such that
$
\nu_{g(x)}(\cdot)=
\mu_x\left(\pi_g^{-1}(\cdot)\right)
$, 
where $\pi_g:S_X \rightarrow S_Y$ is defined as $\pi_g(f)(y)=\sum_{z\in g^{-1}(y)}f(z)$ for all $f\in S_X$, $y \in Y$.
\end{defn}
Clearly, if $(X,\mu)$ is projected onto $(Y, \nu)$ then, for all ${\mathbf{z}} \in [0,1]^Y$ and $x \in X$,
\begin{equation} \label{eq:Gfunctions}
G_X({\mathbf{z}} \circ g|x)=G_Y({\mathbf{z}}|g(x)).
\end{equation}
Since $\mu$ is uniquely determined by $G$, 
 equation~\eqref{eq:Gfunctions} holds  if and only if $(X,\mu)$ is projected 
onto $(Y,\nu)$ and $g$ is the map in Definition~\ref{def:locallyisomorphic}.
The rough idea behind this definition is to assign to every $x \in X$ a label ($g(x)$ drawn from $Y$) in such a way that, 
if $\{\eta_n\}_{n \in \N}$ is a realization of the BRW $(X, \mu)$ then the sum of the particles 
over all vertices with the same label, that is
$\{\pi_g(\eta_n)\}_{n \in \N}$, is a realization of the BRW $(Y, \nu)$.

Note that equation~\eqref{eq:Gfunctions} can be written as $G_X({\mathbf{z}} \circ g)=G_Y({\mathbf{z}})\circ g$ hence
$G^{(n)}_X({\mathbf{z}} \circ g)=G^{(n)}_Y({\mathbf{z}})\circ g$ for all $n \in \N$. As a consequence, for
the global extinction probabilities of these BRWs, we have $\bar{\mathbf{q}}_X=\bar{\mathbf{q}}_Y \circ g$;
indeed $\mathbf{0}_X=\mathbf{0}_Y \circ g$, thus
$\bar{\mathbf{q}}_X=\lim_{n \to \infty}G_X({\mathbf{0}_X})=\lim_{n \to \infty}G_Y({\mathbf{0}_Y}) \circ g=\bar{\mathbf{q}}_Y \circ g$.

A BRW which can be projected onto a BRW defined on a finite set,  
 is called \textit{$\mathcal{F}$-BRW} (see \cite[Section 2.4]{cf:BZ14-SLS}).
To give an explicit example, consider a BRW with independent diffusion on a tree with two alternating degrees:
this can be projected onto a BRW on a set of cardinality 2. Other examples are \textit{quasitransitive BRWs} 
(see \cite[Section 2.4]{cf:BZ14-SLS} for the formal definition) where the action of the group of automorphisms (bijective
maps preserving the reproduction laws) has a finite number of orbits. 
 There are non-quasitransitive BRWs which are $\mathcal{F}$-BRWs (see \cite[Figure 1]{cf:BZ14-SLS}).
More generally, 
let us define the map $\varpi_g:[0,1]^Y \to [0,1]^X$ by $\varpi_g(\mathbf{z})=\mathbf{z}\circ g$;
then $\varpi_g(F_{G_Y}) \subseteq F_{G_X}$, indeed, using equation~\eqref{eq:Gfunctions},
$G_x(\varpi_g(\mathbf{z})|x)=G_X({\mathbf{z}} \circ g|x)=G_Y({\mathbf{z}}|g(x))=\mathbf{z}(g(x))=\varpi_g(\mathbf{z})(x)$.
In particular, the set $F_{G_X}$ is closed under the action of all maps $\varpi_g$ for every projection
$g$ of $(X,\mu)$ onto itself. Moreover, it is easy to show that $\mathbf{q}^X(\cdot,g^{-1}(A))=\varpi(\mathbf{q}^Y(\cdot,A))$
for all $A\subseteq X$.

Another example, is the case of BRWs where the laws of the offspring number $\rho_x=\rho$ is independent
of $x \in X$; we call them \textit{Branching Process}-like BRWs (or
\textit{BP}-like BRWs). In this case the BRW can be projected onto a BRW defined on a singleton $Y:=\{y\}$, where 
the law of the number of children of each particle is $\rho$ and $g(x):=y$ for all $x \in X$
(and this last BRW is actually a branching process). It is worth noting that in this case Assumption~\ref{assump:1}
is simply $\rho(1)<1$.
This kind of BRWs has been studied in \cite{cf:BZ4, cf:BZ14-SLS}
where they are called 
\textit{locally isomorphic to a branching process}.
By using 
the equality $\bar{\mathbf{q}}_X=\bar{\mathbf{q}}_Y \circ g$
we have that
 $\bar{\mathbf{q}}$
is a constant vector $c \cdot \mathbf{1}$, where 
$c$ is the smallest fixed point of the function $z \mapsto \sum_{i=0}^\infty \rho(z) z^i$.

\subsection{Conditions for survival/extinction}

We summarize here some conditions for survival and extinction in discrete and continuous time that we need
in the rest of the paper.
For the proofs and further results we refer, for instance, to \cite{cf:BZ, cf:BZ2, cf:BZ4, cf:Z1}.

\begin{teo}\label{th:discretesurv}
Let $(X,\mu)$ be a discrete-time BRW.
\begin{enumerate}
 \item 
There is
local survival starting from $x$ if and only if $M_s(x,x) 
>1$.
\item
There is global survival starting from $x$ if and only if there exists
${\mathbf{z}}\in [0,1]^X$, ${\mathbf{z}}(x)<1$
such that $G({\mathbf{z}}|y) = {\mathbf{z}}(y)$, for all $y \in X$
(equivalently, such that $G({\mathbf{z}}|y) \le {\mathbf{z}}(y)$, for all $y \in X$).
\item
If $(X,\mu)$ is an $\mathcal F$-BRW then
there is global survival starting from $x$ if and only if $M_w(x)>1$.
\end{enumerate}
\end{teo}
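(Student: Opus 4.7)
The plan is to treat the three parts in order, using the generating function $G$ and its iterates throughout, and invoking the projection machinery of Section~\ref{subsec:projections} for part (3).

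For part (1), I would proceed by comparison with an embedded single-type Galton--Watson process. Observe that the expected number of particles at $x$ at time $n$ starting from $\delta_x$ is $m^{(n)}_{xx}$, so $M_s(x,x)=\limsup_n\sqrt[n]{m^{(n)}_{xx}}$ controls the exponential growth rate of this expectation. If $M_s(x,x)<1$, then $\sum_n m^{(n)}_{xx}<\infty$, and a Markov inequality combined with Borel--Cantelli yields almost sure local extinction at $x$; the boundary case $M_s(x,x)=1$ is handled in the same spirit, by showing criticality of the embedded ``return to $x$'' process. If $M_s(x,x)>1$, I would construct an embedded Galton--Watson process whose particles are exactly those descendants that hit $x$ for the first time (along a given ancestral line), and verify that its mean offspring number exceeds $1$, giving supercriticality and hence local survival wpp. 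This is the most delicate part of the argument; the main obstacle is making the embedding rigorous without assuming independent diffusion, since siblings of the same parent may be correlated in their locations, so one must argue via the power series $\sum_n m^{(n)}_{xx}t^n$ and its radius of convergence rather than a purely probabilistic offspring count.

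For part (2), the forward direction is immediate: if there is global survival from $x$, then $\bar{\mathbf{q}}(x)<1$, and $\bar{\mathbf{q}}$ is a fixed point of $G$ (it is the pointwise limit of the nondecreasing sequence $G^{(n)}(\mathbf{0})$, which records the extinction-by-time-$n$ probabilities). For the converse, given any $\mathbf{z}\in[0,1]^X$ with $G(\mathbf{z})\le\mathbf{z}$ and $\mathbf{z}(x)<1$, the monotonicity of $G$ yields $G^{(n)}(\mathbf{0})\le G^{(n)}(\mathbf{z})\le\mathbf{z}$ for every $n$, so in the limit $\bar{\mathbf{q}}(x)\le\mathbf{z}(x)<1$, i.e.\ global survival. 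The same iteration proves the equivalence between the ``fixed point'' and ``subsolution'' formulations: if $G(\mathbf{z})\le\mathbf{z}$, then $G^{(n)}(\mathbf{z})$ is pointwise nonincreasing, bounded below by $\mathbf{0}$, and by continuity of $G$ converges to a fixed point $\mathbf{z}^\ast\le\mathbf{z}$ with $\mathbf{z}^\ast(x)<1$.

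For part (3), I would exploit the projection property. By hypothesis $(X,\mu)$ projects onto a BRW $(Y,\nu)$ with $Y$ finite via some surjection $g\colon X\to Y$, and from Section~\ref{subsec:projections} we have $\bar{\mathbf{q}}_X=\bar{\mathbf{q}}_Y\circ g$. Hence global survival from $x$ in $(X,\mu)$ is equivalent to global survival from $g(x)$ in $(Y,\nu)$. For a BRW on a finite set, classical multitype branching process theory, together with part~(2) applied to the finite-dimensional generating function, reduces global survival to the Perron--Frobenius eigenvalue of the first-moment matrix of $(Y,\nu)$ being strictly greater than $1$; and a direct Perron--Frobenius computation shows that on the irreducible class of $g(x)$ this eigenvalue coincides with $M_w(g(x))$. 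The remaining verification is that $M_w$ is preserved under projection, i.e.\ $M_w(x)=M_w(g(x))$, which follows by summing the entries of the first-moment matrix of $(X,\mu)$ over the fibers of $g$ and comparing with the first-moment matrix of $(Y,\nu)$.
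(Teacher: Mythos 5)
The paper does not actually prove Theorem~\ref{th:discretesurv}: it states it as a summary of known results and refers to the cited references (in particular \cite{cf:Z1} and \cite{cf:BZ2}) for the proofs, so there is no in-paper argument to compare against. Your proposal correctly reconstructs the standard arguments from those references: part (2) is exactly the monotone-iteration proof that $\bar{\mathbf{q}}=\lim_n G^{(n)}(\mathbf{0})$ is the smallest solution of $G(\mathbf{z})\le\mathbf{z}$; part (1) is the first-passage generating function argument (the identity $\sum_n m^{(n)}_{xx}z^n=(1-\Phi(x,x|z))^{-1}$ linking the radius of convergence $1/M_s(x,x)$ to the mean $\Phi(x,x|1)$ of the embedded first-return Galton--Watson process, which the paper itself uses in Section~\ref{sec:proofs}); and part (3) is the projection-plus-Perron--Frobenius reduction. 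Two minor remarks: Borel--Cantelli alone does not handle the boundary case $M_s(x,x)=1$ (the series may diverge), so there you must rely entirely on the criticality of the embedded process, together with Assumption~\ref{assump:1} to exclude the degenerate one-offspring case; and your worry about correlated sibling locations is not a real obstacle to the embedding, since distinct particles always reproduce independently in a BRW --- the correlations affect only the offspring of a single parent, which is precisely what the first-passage series $\Phi$ accounts for.
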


Local survival depends only on the first-moment matrix while global
survival, except for particular classes as explained in \cite[Section 3.1]{cf:BZ14-SLS}, does not.
Moreover, each solution $\mathbf{z}$ of the inequality in Theorem~\ref{th:discretesurv}(2) satisfies
$\mathbf{z} \ge \bar {\mathbf{q}}$, since the latter is the smallest among such solutions.

For a BRW with independent diffusion, from equation~\eqref{eq:Gcontinuous} and Theorem~\ref{th:discretesurv}(2)
we have that there is
 global survival starting from $x$, if and only if there exists
${\mathbf{v}} \in [0,1]^X$, ${\mathbf{v}}(x)>0$ such that
\begin{equation}\label{eq:ineq}
 M{\mathbf{v}} \ge {\mathbf{v}} /(\mathbf{1}-{\mathbf{v}}), \qquad \text{(equivalently, }
 M{\mathbf{v}} = {\mathbf{v}} /(\mathbf{1}-{\mathbf{v}}) \text{).}
\end{equation}
Remember that, for a continuous-time BRW, $M=\lambda K$. As before, 
each solution $\mathbf{v}$ of the previous inequality satisfies
$\mathbf{v} \ge \mathbf{1}- \bar {\mathbf{q}}$, since the latter is the largest among such solutions.
In the continuous-time case however, global and
local survival are related to the critical values $\lambda_w(x)$ and $\lambda_s(x)$ so it is
useful to be able to give some estimates.

\begin{teo}\label{th:continuoussurv}
Let $(X,K)$ be a continuous-time BRW.
\begin{enumerate}
 \item 
$\lambda_s(x)=1/K_s(x,x)$ and if 
$\lambda=\lambda_s(x)$ then there is local extinction at $x$.
\item
$
 \lambda_w(x) \ge 1/K_w(x).
$
\item If $(X,K)$ is an $\mathcal F$-BRWs then  
$\lambda_w(x)=1/K_w(x)$ and when $\lambda=\lambda_w(x)$ there
is global extinction starting from $x$.
\end{enumerate}
\end{teo}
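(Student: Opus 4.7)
The strategy is to reduce each item to its discrete-time counterpart (Theorem~\ref{th:discretesurv}). Since the continuous-time BRW $(X,K)$ and its discrete-time counterpart defined through~\eqref{eq:counterpart} share the same survival behaviour, and since the first-moment matrix of the counterpart is $M=\lambda K$, taking $n$th powers gives $m_{xy}^{(n)}=\lambda^n k_{xy}^{(n)}$, hence
\[
M_s(x,y)=\lambda K_s(x,y),\qquad M_w(x)=\lambda K_w(x).
\]

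\emph{Parts (1) and (3).} By Theorem~\ref{th:discretesurv}(1), local survival at $x$ holds iff $M_s(x,x)=\lambda K_s(x,x)>1$, which immediately gives $\lambda_s(x)=1/K_s(x,x)$; at $\lambda=\lambda_s(x)$ we have $M_s(x,x)=1$ and the strict inequality fails, so there is local extinction at $x$. In the $\mathcal F$-BRW setting, the same argument applies using Theorem~\ref{th:discretesurv}(3): global survival starting from $x$ is equivalent to $M_w(x)=\lambda K_w(x)>1$, so $\lambda_w(x)=1/K_w(x)$ and at the critical value there is global extinction.

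\emph{Part (2).} Without the $\mathcal F$-BRW hypothesis only one direction is available, and we argue by a first-moment bound rather than through the generating function. Pick a subsequence $n_k\to\infty$ along which $\sqrt[n_k]{\sum_y k_{xy}^{(n_k)}}\to K_w(x)$. In the discrete-time counterpart, the expected total population at time $n_k$ starting from $\delta_x$ equals $\sum_y m_{xy}^{(n_k)}=\lambda^{n_k}\sum_y k_{xy}^{(n_k)}$, so by Markov's inequality
\[
\pr^{\delta_x}\!\Big(\sum_y \eta_{n_k}(y)\ge 1\Big)\le \lambda^{n_k}\sum_y k_{xy}^{(n_k)}.
\]
When $\lambda K_w(x)<1$, the right-hand side goes to $0$. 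Since the events $\{\sum_y\eta_n(y)\ge 1\}$ are decreasing in $n$ (no particle can be reborn once the population is empty), global survival is their monotone intersection and its probability is the limit of the probabilities above, hence $0$. Thus $\lambda<1/K_w(x)$ forces global extinction, so $\lambda_w(x)\ge 1/K_w(x)$.

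The main obstacle is part (2): without the $\mathcal F$-BRW structure there is no direct equivalence between $M_w>1$ and global survival, and the reverse inequality $\lambda_w(x)\le 1/K_w(x)$ can genuinely fail. The trick that unlocks the one-sided bound is that one only needs decay of the expected mass along the particular subsequence realising the $\liminf$ in the definition of $K_w(x)$, which is exactly what Markov's inequality can harvest.
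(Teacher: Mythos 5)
Your proof is correct. The paper does not prove Theorem~\ref{th:continuoussurv} itself --- it states it as a summary of known results and defers the proofs to the cited references \cite{cf:BZ, cf:BZ2, cf:BZ4, cf:Z1} --- but your argument is exactly the standard route those references take: identify $M=\lambda K$ for the discrete-time counterpart so that $M_s=\lambda K_s$ and $M_w=\lambda K_w$, invoke Theorem~\ref{th:discretesurv}(1) and (3) for parts (1) and (3) (including the strictness of the inequality at criticality), and for part (2) use the first-moment bound $\pr^{\delta_x}(\sum_y\eta_n(y)\ge 1)\le\sum_y m^{(n)}_{xy}$ together with the monotonicity of the survival events, which correctly lets you conclude from decay along the subsequence realising the $\liminf$ defining $K_w(x)$.
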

More conditions can be found for instance in \cite{cf:BZ, cf:BZ2, cf:BZ4}. 
In particular, $\lambda_w$ admits a characterization, in the spirit of equation \eqref{eq:ineq},
in terms of a system of functional inequalities (see \cite[Theorem 4.2]{cf:BZ2}).
Even if there can be global survival
when $\lambda=\lambda_w$ 
(see \cite[Example 3]{cf:BZ2}), this is not true for a continuous-time $\mathcal{F}$-BRW. 
Indeed, in this case, $\lambda_w(x)=1/K_s(x,x)$ and there is always global extinction starting from $x$
when $\lambda=\lambda_w(x)$ (see \cite[Theorems 4.7 and 4.8]{cf:BZ2}).

So far all results describe conditions for extinction versus survival, that is, $\mathbf{q}(x,A)=1$ versus $\mathbf{q}(x,A)<1$. 
One could also investigate whether $\bar{\mathbf{q}}(x)=\mathbf{q}(x,A)<1$ or $\bar{\mathbf{q}}(x)<\mathbf{q}(x,A)<1$;
to put it another way, what is the probability of local survival conditioned to global survival? 
Studying strong local survival is more complicated than working on local or global survival. Many properties which
can be easily proven when studying local/global behaviour, do not hold for the strong local one.
For instance, as we already observed, 
even the irreducible case, it is not possible give a reasonable definition of a critical parameter for strong local survival as we did
for local and global survival. Moreover, in the irreducible case, local and global behaviours do not depend on the starting vertex
(or, more generally, on the starting configuration as long as it is finite) but this is not true for strong local behaviour
unless $\rho_x(0)>0$ for all $x \in X$
(see Remark~\ref{rem:irreducible} below and \cite[Example 4.3]{cf:BZ14-SLS}).

Some conditions for strong local survival are achieved by using a generating function approach 
(see  \cite[Section 3.2]{cf:BZ14-SLS}, in particular Theorem~3.4 and Corollaries~3.1 and 3.2)
and they are briefly discussed in Section~\ref{sec:Q&A}.
Among other results available in the literature, it is worth mentioning
a characterization of strong local survival originally proven in \cite[Theorem 2.1]{cf:MenshikovVolkov} and extended
to a generic irreducible BRW in \cite[Theorem 3.5]{cf:BZ14-SLS}.
Results on strong local survival for BRWs in random environment can be found, for instance, in \cite{cf:GMPV09}.

\section{Fixed points and extinction probabilities}\label{sec:Q&A}

Define ${\mathbf{q}}_n(x,A)$
as the probability of extinction in $A$ no later than the $n$-th
generation starting with one particle at $x$, namely
${\mathbf{q}}_n(x,A)=\Prob^{\delta_x}(\eta_k(y)=0,\, \forall k\ge n,\,\forall y\in A)$. The sequence
$\{{\mathbf{q}}_n(x,A)\}_{n \in \N}$ is nondecreasing and satisfies
\begin{equation}\label{eq:extprobab}
 \begin{cases}
 {\mathbf{q}}_{n}(\cdot,A)=G({\mathbf{q}}_{n-1}(\cdot, A)),& \quad \forall n \ge 1\\
 {\mathbf{q}}_0(x,A)=0, &\quad \forall x \in A.
\end{cases}
\end{equation}
Moreover,  ${\mathbf{q}}_n(x, A)$
converges to ${\mathbf{q}}(x,A)$,
which is the probability of local extinction in $A$
starting with one particle at $x$ (see
Definition~\ref{def:survival}). 
Since $G$ is continuous we have that ${\mathbf{q}}(\cdot,A)=G({\mathbf{q}}(\cdot,
A))$, hence these extinction probabilities are
fixed points of $G$, that is, elements of $F_G:=\{\mathbf{z} \in [0,1]^X \colon G(\mathbf{z})=\mathbf{z}\}$.

Note that ${\mathbf{q}}(\cdot, \emptyset)= \mathbf 1$.
Since $\bar {\mathbf{q}}=\lim_{n \to \infty} G^{(n)}(\mathbf{0})$ we have that
$\bar {\mathbf{q}}$ is the smallest fixed point of $G$ in $[0,1]^X$ (see \cite[Corollary 2.2]{cf:BZ2});
we stress here that $\bar {\mathbf{q}}$ is not only the smallest extinction probability vector, but 
the smallest among all fixed points; hence $\bar {\mathbf{q}}= {\mathbf{1}}$ if and only if $F_G$ is a singleton.
Using the same arguments, one can prove that $\bar {\mathbf{q}}$ is the smallest
fixed point of $G^{(m)}$ for all $m \in \N$.

A meaningful consequence of the convergence ${\mathbf{q}_n}(x,A) \uparrow {\mathbf{q}}(x,A)$ is that, whenever
${\mathbf{q}}(x,A)<1$, the probability of survival 
conditioned on surviving at $A$ up to a time larger than or equal to $n$ converges to $1$, that is, 
$(1-{\mathbf{q}}(x,A))/(1-{\mathbf{q}_n}(x,A)) \uparrow 1$. In the case
$A=X$, ``surviving up to a time larger than or equal to $n$'' is equivalent to 
``surviving up to time $n$''; thus, given that the population survived globally up to a sufficiently large (but finite)
time $n$ then the conditional
probability of survival is arbitrarily close to $1$.

Note that $A \subseteq B \subseteq X$ implies ${\mathbf{q}}(\cdot,A)
\ge {\mathbf{q}}(\cdot,B)
\ge \bar {\mathbf{q}}$. 
From this we can derive trivial implications between local survival or extinction in $A$ and $B$. In particular, strong 
local survival in $A$ from $x$
implies strong local survival in $B$ from $x$; moreover, non-strong local survival in $B$ from $x$ implies either non-strong 
local survival in $A$ from $x$ or local extinction in $A$ from $x$.

Since for all finite $A\subseteq X$ we have ${\mathbf{q}}(x,A) \ge 1-\sum_{y \in A} (1-{\mathbf{q}}(x,y))$
then, for any given finite $A \subseteq X$, ${\mathbf{q}}(x,A)=1$ if and only if ${\mathbf{q}}(x,y)=1$ for all $y \in A$.

If $x \to x^\prime$ and $A \subseteq X$ then ${\mathbf{q}}(x^\prime,A)<1$ implies
${\mathbf{q}}(x,A)<1$; as a consequence,
if $x \rightleftharpoons x^\prime$ then
${\mathbf{q}}(x,A)<1$ if and only if ${\mathbf{q}}(x^\prime,A)<1$. Moreover if  $y \rightleftharpoons y^\prime$ 
we have ${\mathbf{q}}(x,y)={\mathbf{q}}(x,y^\prime)$ for all $x \in X$.
The main properties in the irreducible case are summarized in the following remark.

\begin{rem}\label{rem:irreducible}
In the irreducible case, for every $x \in X$ and $A \subseteq X$ finite and nonempty,  we have 
${\mathbf{q}}(x,A)={\mathbf{q}}(x,x)$. Thus ${\mathbf{q}}(x,A)={\mathbf{q}}(x,B)$ for every couple $A,B$ of finite, nonempty subsets
of $X$.

If, in addition, $\rho_x(0)>0$ for all $x \in X$, we have that if $\bar {\mathbf{q}}(x)={\mathbf{q}}(x,A)$ for
some $x \in X$ and a finite subset $A \subseteq X$ then $\bar {\mathbf{q}}(y)={\mathbf{q}}(y,B)$ for
all $y \in X$ and all (finite or infinite) subsets $B \subseteq X$ (hence, strong local survival is a common property
of all subsets and all starting vertices, see Theorem~\ref{th:modifiedBRW}).
%
Clearly, this may not be true in the reducible case. 
Besides, if we drop the assumption $\rho_x(0)>0$ for all $x \in X$, we might actually
have $\bar {\mathbf{q}}(x)={\mathbf{q}}(x,A)<1$ and $\bar {\mathbf{q}}(y)<{\mathbf{q}}(y,A)$ for some $x,y \in X$ and a finite $A \subseteq X$
 even when the BRW is irreducible (see \cite[Example 4.3]{cf:BZ14-SLS}).
Hence, in general, even for irreducible BRWs, 
strong local survival is not a common property of all vertices
 as local and global survival are.
\end{rem}

As we recalled in the introduction, the generating function $G$ of a branching process 
has at most two fixed points in $[0,1]$,  $\bar q$ and $1$.
This is still true for BRWs on finite sets $X$ (see for instance \cite[Corollary 3.1]{cf:BZ14-SLS} or
the proof of \cite[Theorem 3]{cf:Spataru89} which is incorrect in the infinite case, but correct in the finite one).
Moreover, for a branching process, $G$ is strictly convex
and $U_G$ is closed, compact and convex (recall that $U_G$ was defined in Section~\ref{sec:intro}
as $\{\mathbf{z} \in [0,1]^X \colon G(\mathbf{z})\le \mathbf{z}\}$).
Let us denote by $E_G$
the set of extinction probabilities: $E_G:=\{\mathbf{q}(\cdot,A)\colon A \subseteq X\}$.
For a branching process it is true that the extremal points of $U_G$ are the fixed points $\bar q$ and $1$
(where $\bar q$ may coincide with 1) and
all fixed points are extinction probabilities: in short, $\textrm{ext}(U_G)=F_G$
and $F_G=E_G$.

Some of these properties still hold in the general case, others do not, even when $X$ is finite.
It is clear that $F_G$ and 
$U_G$  are always closed and compact sets
 (with respect to the product topology of $[0,1]^X$), since they are closed subsets of the compact topological space $[0,1]^X$. 
We provide some counterexamples and conjectures on the other properties in the following sections.

\subsection{Convexity of $G$ and $U_G$ and extremal points}\label{subsec:convexity}

Given any $\mathbf{w} \le \mathbf{z} \in [0,1]^X$ it is true that $t \mapsto G(\mathbf{w} +t(\mathbf{z}-\mathbf{w}))$ is convex,
nevertheless $G$ is not always a convex function, even when $X$ is finite, as the following example shows.

\begin{exmp}\label{exmp:counterexample1}
Let $X=\{1,2\}$ and $\mu_1=\delta_{(1,1)}$, $\mu_2=\frac12 \delta_{(0,0)} +\frac12 \delta_{(1,0)}$.
Roughly speaking, every particle at 1 has one child at $1$ and one at $2$ almost surely, while every particle at 2 has one child at 1 with probability $1/2$
and no children with probability $1/2$.
The generating function is
\[
 G(x,y)= 
\begin{pmatrix}
 xy \\
(1+x)/2
\end{pmatrix}
\]
which is not convex. Nevertheless $U_G=\{(x,y)\in [0,1]^2\colon 2y \ge x+1\}$ is convex and $F_G=\{(0,1/2), \, (1,1)\}$.
Clearly $\textrm{ext}(U_G)=F_G \cup \{(0,1)\}$.
\end{exmp}
The following two examples show that not only $U_G$ is not necessarily convex, but also its extremal points
may not be elements of $F_G \cup \{0,1\}^X$.

\begin{exmp}\label{exmp:counterexample2}
Let $X=\{1,2\}$ and consider
\[
 G(x,y)= 
\begin{pmatrix}
 (1+3y^2)/4\\
(1+3x^2)/4
\end{pmatrix}
\]
which corresponds to the process where each particle has no children with probability $1/4$ and
2 children on the other vertex with probability $3/4$. In this case $F_G$ contains two vertices on the bisector (one of them is $(1,1)$ of course)
while $U_G$ is the intersection of $(1+3y^2)/4\le x$ and $(1+3x^2)/4 \le y$ and the set of its extremal points is the whole boundary.
\end{exmp}
\begin{exmp}\label{exmp:counterexample3}
Take $X:=\{1,2,3\}$, $\mu_1=\delta_{(0,1,1)}$, $\mu_2=\delta_{(1,2,1)}$ and
$\mu_3=\delta_{(1,1,0)}$. Roughly speaking every particle at $j$ has two children: one in each point different from $j$.
The generating function is
\[
 G(x_1,x_2,x_3)= 
\begin{pmatrix}
 x_2x_3 \\
x_1x_3 \\
x_1x_2
\end{pmatrix}.
\]
According to~\cite[Corollary 3.1]{cf:BZ4} for a finite-dimensional, irreducible BRW
there are at most two solutions of  $G(\mathbf{z}) \ge \mathbf{z}$ when $\mathbf{z} \ge \bar {\mathbf{q}}$, 
that is, $\bar {\mathbf{q}}$ and $\mathbf{1}$ (in this case the 
vertices $(1,1,1)$ and $(0,0,0)$, which are the only fixed points). It is easy to see that
$(1/2,1/2,1)$ and $(1/2,1,1/2)$ are in $U_G$. The line connecting these points can be parametrized as 
$\mathbf{z}(t):=(1/2, 1/2+t/2, 1-t/2)$, $t \in [0,1]$ 
and $\mathbf{z}(t) \not \in U_G$ for all $t \in (0,1)$
(since $G(\mathbf{z}(t)) \not \leq \mathbf{z}(t)$ for all $t \in (0,1)$). Figures~\ref{fig:UG_top} and \ref{fig:UG_bottom} show the shape
of $U_G$ as seen from the top (vertex $(1,1,1)$) and from the bottom (vertex $(0,0,0)$).

 \begin{figure}[H]
 \centering
 \begin{minipage}{0.45\textwidth}
 \centering
  \includegraphics[width=0.65\textwidth]{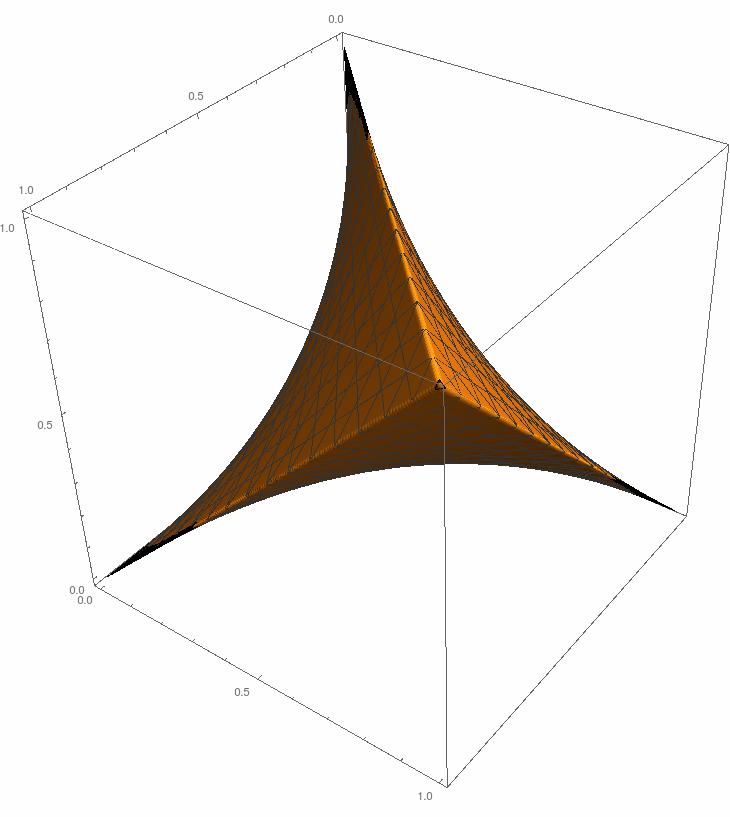}
  \caption{$U_G$ from the top.}\label{fig:UG_top}
 \end{minipage}
\begin{minipage}{0.45\textwidth}
\centering
 \includegraphics[width=0.65\textwidth]{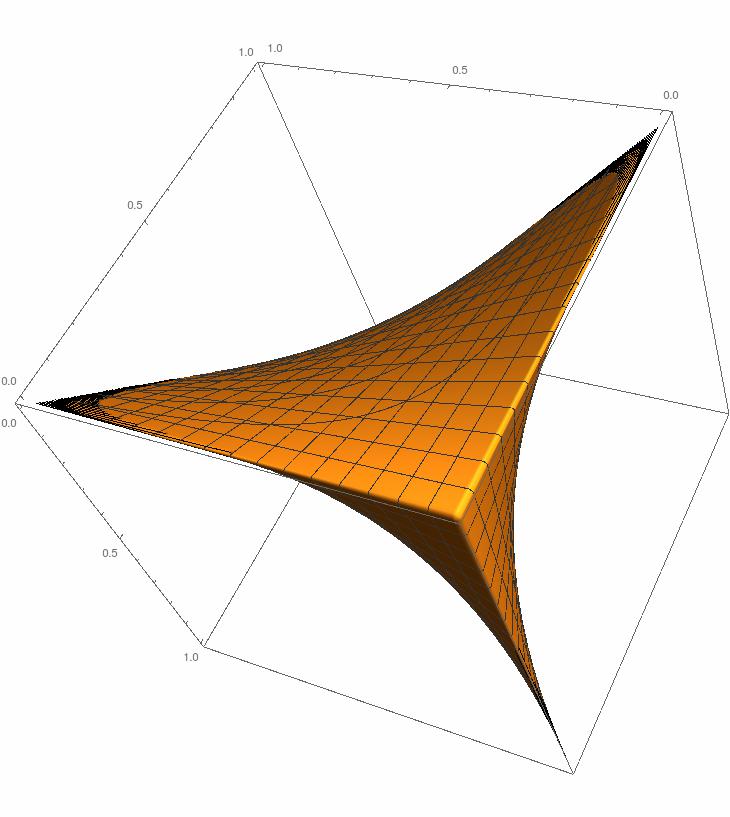}
  \caption{$U_G$ from the bottom.}\label{fig:UG_bottom}
\end{minipage}
\end{figure}
\end{exmp}

\subsection{How many fixed points does $G$ have?}\label{subsec:numberfixedpoints}

If a BRW is reducible, then there can be an infinite (even uncountable) number of fixed points. Consider 
this completely disconnected BRW: let $\{\rho_x\}_{x \in X}$ be an infinite collection of reproduction laws
of supercritical branching processes, define the expected number of children $m_x:=\sum_{n \in \N} n \rho_x(n)>1$
and denote by $c_x<1$ the extinction probability of the $x$th branching process.
Clearly $G(\mathbf{z}|x)=\sum_{n \in \N} \rho_x(n) \mathbf{z}(x)^n$ and 
$F_G=\prod_{x \in X} \{c_x,1\}$, which is uncountable. Moreover, every fixed point is an extinction
probability, since for every $\mathbf{z}\in F_G$, $\mathbf{z}=\mathbf{q}(\cdot, A)$,
where $A:=\{x \in X \colon \mathbf{z}(x)<1\}$. 

Let us discuss the nontrivial case of an irreducible BRW. 
The generating function of an irreducible BRW has at most two fixed points, namely $\bar{\mathbf{q}}$ and $\mathbf{1}$,
when $X$ is finite.
Since $E_G\subseteq F_G$, in order to find examples where $|F_G|\ge3$, it suffices to find cases with $|E_G|\ge3$.
In particular, a BRW with non-strong local survival would do.
In \cite{cf:BZ14-SLS} two such examples were provided: \cite[Examples 4.4 and 4.5]{cf:BZ14-SLS} are irreducible BP-like BRWs with independent diffusion
and non-strong local survival, thus with three different extinction probabilities.


It is worth mentioning that
in the case of irreducible, 
quasitransitive BRWs, 
$\{\bar{\mathbf{q}},\mathbf{1}\}  = E_G$
(local survival starting from some $x \in X$ implies 
strong local survival starting from all
$x \in X$). Thus $|E_G|=2$ for irreducible, quasitransitive BRWs.
The aforementioned examples in \cite{cf:BZ14-SLS} show that
$\{\bar{\mathbf{q}},\mathbf{1}\} \not = E_G$
(thus, non-strong local survival) 
is possible in the case of an irreducible
$\mathcal{F}$-BRW.
We recall that by \cite[Theorem 3.4]{cf:BZ14-SLS}, for an $\mathcal{F}$-BRW, every fixed point $\mathbf{z}$ different from
$\bar{\mathbf{q}}$ satisfies $\sup_{x \in X} \mathbf{z}(x)=1$. In particular, if the BRW is irreducible either
$\mathbf{q}(x,x)=\bar{\mathbf{q}}(x)$ for all $x \in X$ or $\sup_{x \in X} \mathbf{q}(x,x)=1$. 


These remarks do not settle the question of the possible cardinalities of $F_G$, even in the quasitransitive case,
since, as we show in the following section, $F_G$ can be much larger than $E_G$.
Indeed, Example~\ref{exmp:irreduciblemorefixedpoints} proves that, even for an $\mathcal{F}$-BRW,
there may be an uncountable number of fixed points.
It is an open question whether this also holds 
for some irreducible, quasitransitive BRW:
we conjecture that the
answer is positive (see
Remark~\ref{rem:quasitransitive}).

\subsection{Is every fixed point an extinction probability?}\label{subsec:natureoffixedpoints}


The answer is negative.
We start with a reducible example and then we move to an irreducible example.

\begin{exmp}\label{exmp:reduciblemorefixedpoints}
Consider a BRW on $\N$ where every particle at $n$ has two children at $n+1$ with probability $p$
and no children with probability $1-p$ ($p>1/2$ to make it supercritical).
This is a BP-like BRW; 
easy computations (see \cite[Proposition 4.33]{cf:BZ13}) show that 
$G(\mathbf{z}|n)=p \mathbf{z}(n+1))2+1-p$ and $\bar{\mathbf{q}}(x)=(1-p)/p$ for every $x \in \N$. 
Moreover, due to the right drift,
$\mathbf{q}(\cdot,A)=\mathbf{1}$ if $A$ is finite and $\mathbf{q}(\cdot,A)=\bar{\mathbf{q}}$ if $A$ is infinite.
Every fixed point must satisfy $\bar{\mathbf{q}} \le \mathbf{z} \le \mathbf{1}$, thus $\mathbf{z}(0) \in [(1-p)/p,1]$. Clearly if 
$\mathbf{z}(0) =(1-p)/p$ (resp.~$\mathbf{z}(0) =1$) we have $\mathbf{z}=(1-p)/p \cdot \mathbf{1}$ (resp.~$\mathbf{z}=\mathbf{1}$).
Fix $(1-p)/p<\mathbf{z}(0)<1$; the equation $G(\mathbf{z})=\mathbf{z}$ is equivalent to the recursive relation $\mathbf{z}(n+1)=
\sqrt{(\mathbf{z}(n)-(1-p))/p}$. This defines a unique sequence $\mathbf{z}$ which is a fixed point. Indeed
$(1-p)/p<\mathbf{z}(0)<1$ and, by induction, if $(1-p)/p<\mathbf{z}(n)<1$ then $(1-p)^2/p^2<(\mathbf{z}(n)-(1-p))/p<1$, 
thus $(1-p)/p<\mathbf{z}(n+1)<1$.
Obviously, all fixed points can be obtained by means of this procedure, hence the set $F_G$ is uncountable while there are just
two extinction probabilities.
\end{exmp}

\begin{exmp}\label{exmp:irreduciblemorefixedpoints}
Consider the BRW on $\N$ where every particle at $n$ has two children at $n+1$ with probability $p-\varepsilon$,
one child at $\max(0,n-1)$ with probability $\varepsilon$
and no children with probability $1-p$. We require that $2p-\varepsilon>1$ for global survival,  $\varepsilon>0$ for irriducibility,
$p < 1/\sqrt{2}$ and $\varepsilon(p-\varepsilon) \le 1/8$ for technical reasons (take for instance $p=2/3$ and $\varepsilon \le 2/9$).
\begin{figure}[H]
 \centering
 \begin{minipage}{0.9\textwidth}
 \centering
  \includegraphics[width=0.75\textwidth]{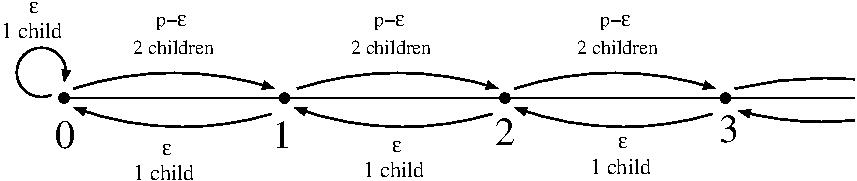}
  \caption{\scriptsize{The irreducible BRW of Example~\ref{exmp:irreduciblemorefixedpoints}.}}\label{fig:exmpirreducible}
 \end{minipage}
\end{figure}
This is an irreducible BP-like BRW (see Figure~\ref{fig:exmpirreducible});
according to Theorem~\ref{th:discretesurv}, global and local survival depend on $M_w$ and $M_s$.
To compute these parameters we refer to (\cite{cf:BZ, cf:BZ2} and \cite[Section 4.6]{cf:BZ13}).
In particular, $M_w$ is the expected number of children $2p-\varepsilon$.
 Moreover we have 
 (see \cite[Proposition 4.33]{cf:BZ13})  
$G(\mathbf{z}|n)=(p-\varepsilon) \mathbf{z}(n+1)^2 +\varepsilon\mathbf{z}(\max(0,n-1))+1-p$ and 
$\bar{\mathbf{q}}=(1-p)/(p-\varepsilon) \cdot \mathbf{1}$. Besides,
since $\varepsilon(p-\varepsilon) \le 1/8$ we have local extinction, that is, $M_s\le 1$
(for all the details,  see Section~\ref{sec:proofs} ).
Hence for this BRW there is global survival but local extinction;
thus 
$\mathbf{q}(\cdot,A)=\mathbf{1}$ if $A$ is finite and, since the BRW must drift to the right in
order to survive, $\mathbf{q}(\cdot,A)=\bar{\mathbf{q}}$ if $A$ is infinite.

We prove (see Section~\ref{sec:proofs}) that the equation $G(\mathbf{z})=\mathbf{z}$,
which is equivalent to the recursive equation 
$\mathbf{z}(n+1)=h\big (\mathbf{z}(n),\mathbf{z}(\max(0,n-1)) \big )$ where
$h(x,y):=\sqrt{\big (x - y \varepsilon-(1-p)\big )/(p-\varepsilon)}$,
defines a unique fixed point for every $\mathbf{z}(0) \in [(1-p)/(p-\varepsilon),1]$ (and every fixed point can be obtained
this way). Thus the set of fixed points is uncountable but there are just two extinction probabilities. 
\end{exmp}

We conjecture that the previous example extends to quasitransitive BRWs as the following remark suggests.

\begin{rem}\label{rem:quasitransitive}
 Consider the BRW on $\Z$ where every particle at $n$ has two children at $n+1$ with probability $p-\varepsilon$ (such that
$2p-\varepsilon>1$), one child at $n-1$ with probability $\varepsilon$
and no children with probability $1-p$: due to global survival, local extinction and the right drift we have 
just two extinction probabilities, namely
$\mathbf{q}(\cdot,A)=\mathbf{1}$ if $\sup A$ is finite and $\mathbf{q}(\cdot,A)=\bar{\mathbf{q}}$ if $\sup A$ is infinite. 

Suppose that $p$ and $\varepsilon$ satisfy the assumptions   of Example~\ref{exmp:irreduciblemorefixedpoints};
in order to find an uncountable set of fixed points we can proceed as follows.
Any fixed point $\mathbf{z}$ of Example~\ref{exmp:irreduciblemorefixedpoints}, outside $\bar{\mathbf{q}}$ and
$\mathbf{1}$, is a strictly increasing sequence $\{\mathbf{z}(n)\}_{n \in \N}$ converging to $1$.
The function $\phi_n$ mapping $\mathbf{z}(0)$ to $\mathbf{z}(n)$ is continuous, strictly increasing and
maps $(1-p)/(p-\varepsilon)$ and $1$ into themselves; thus $\phi_n$ is an invertible map from
$[ (1-p)/(p-\varepsilon),1 ]$ into itself.
More precisely, $\phi_n$ can be obtained recursively as
\[
 \begin{cases}
\phi_0(x):=x \\
\phi_1(x):=h(x,x) \\
 \phi_{n+1}(x)=h\big (\phi_{n}(x),\phi_{n-1}(x)\big )
 \end{cases}
\]
where $h(x,y):=\sqrt{\big (x - y \varepsilon-(1-p)\big )/(p-\varepsilon)}$ as in Example~\ref{exmp:irreduciblemorefixedpoints}.
Moreover $\{\phi_n(x)\}_{n \in \N}$ is strictly increasing
for all $x \in \big ( (1-p)/(p-\varepsilon),1 \big )$ and constant for all 
$x \in \{ (1-p)/(p-\varepsilon),1 \}$.
Fix $\alpha \in \big ( (1-p)/(p-\varepsilon),1 \big )$ and define
$\mathbf{z}^{(n)} \in [0,1]^\Z$ as
\[
 \mathbf{z}^{(n)}(i):=
 \begin{cases}
 \phi_{n+i}(\phi_n^{-1}(\alpha)) & \textrm{if } i \ge -n, \\
 0 & \textrm{if } i < -n.
 \end{cases}
\]
This is a left-translation of the fixed points of the previous example such that $\mathbf{z}^{(n)}(0)=\alpha$ for every $n \in \N$.
We conjecture that the sequence $\{\mathbf{z}^{(n)}\}_{n \in \N}$ converges (pointwise) to
some $\tilde{\mathbf{z}} \in \big ((1-p)/(p-\varepsilon),1 \big )^\Z$; more precisely we conjecture that 
$\{\mathbf{z}^{(n)}(i)\}_{n \in \N}$ is strictly increasing (resp.~decreasing) when $i$ is positive (resp.~negative). 
If this holds, due to the continuity
of the map $(x,y) \mapsto (p-\varepsilon)x^2 +\varepsilon y +1-p$, then
$\tilde{\mathbf{z}}(n)=(p-\varepsilon)\tilde{\mathbf{z}}(n+1)^2 +\varepsilon \tilde{\mathbf{z}}(n-1)  +1-p$
for every $i \in \Z$; whence, $\tilde{\mathbf{z}}$ is a (non constant) fixed point for the generating function
of the quasitransitive BRW described above.
\end{rem}



%
%
%

Let us summarize: we proved that, in the irreducible case, 
\begin{equation*}
 \begin{split}
X \textrm{ finite } &\Longrightarrow \{\bar{\mathbf{q}}, \mathbf{1}\}=E_G=F_G \qquad \qquad
\scriptstyle{\textrm{\cite[Corollary 3.1]{cf:BZ14-SLS}}}\\
X \textrm{ infinite, }(X,\mu) \textrm{ quasitransitive } &\Longrightarrow \{\bar{\mathbf{q}}, \mathbf{1}\}=E_G  \, 
(\subsetneq?)\, F_G \qquad \quad
\scriptstyle{\textrm{\cite[Corollary 3.2]{cf:BZ14-SLS}}}\\
 X \textrm{ infinite, }(X,\mu)\, \mathcal{F}\textrm{-BRW } &\Longrightarrow \{\bar{\mathbf{q}}, \mathbf{1}\}\subsetneq  
 E_G\subsetneq F_G\qquad \qquad
 \scriptstyle{\textrm{\cite[Examples 4.4 and 4.5]{cf:BZ14-SLS}, Example~\ref{exmp:irreduciblemorefixedpoints}}},\\
 \end{split}
\end{equation*}
where $\subsetneq$ means there are cases where the inclusion is proper and cases where the equality holds.
We point out here that the proper inclusion $\{\bar{\mathbf{q}}, \mathbf{1}\} \neq E_G$ is equivalent to non-strong local
survival (for some set $A$ starting from some vertex $x$), while $\{\bar{\mathbf{q}}, \mathbf{1}\} \neq
 F_G$ tells us nothing about strong local survival.
We believe that following the ideas of Remark~\ref{rem:quasitransitive}
one could obtain an example where $E_G  \neq F_G$ for a quasitransitive BRW (hence $E_G  \subsetneq F_G$)
but this exceeds the purpose of this paper.

\section{Strong local survival and local modifications}
\label{sec:survivalprob}

We recall here the following theorem, (it is essentially \cite[Theorem 3.3]{cf:BZ14-SLS}).
In the case of global survival, it gives equivalent conditions for strong local survival
in terms of extinction probabilities .
\begin{teo}\label{th:strongconditioned}
For every nonempty subset $A \subseteq X$, the following assertions are equivalent.
\begin{enumerate}[(1)]
\item  ${\mathbf{q}}(x, A) = \bar {\mathbf{q}}(x)$, for all $x \in X$;
\item ${\mathbf{q}}_0(x,A) \le \bar {\mathbf{q}}(x)$, for all $x \in X$;
\item for all $x \in X$, either $\bar {\mathbf{q}}(x)=1$ or
the probability of visiting $A$ at least once starting from $x$ conditioned on global survival starting from $x$ is $1$;
\item  for all $x \in X$, either $\bar {\mathbf{q}}(x)=1$ or
the probability of local survival in $A$ starting from $x$ conditioned on global survival starting from $x$ is $1$
(strong local survival in $A$ starting from $x$).
\item For all $x \in X$ the probability of surviving globally starting from $x$ without ever visiting $A$ is $0$.
\end{enumerate}
\end{teo}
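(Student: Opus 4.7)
The plan is to close the cyclic chain $(1) \Rightarrow (4) \Rightarrow (3) \Rightarrow (5) \Rightarrow (2) \Rightarrow (1)$. Throughout I rely on the a priori inequality $\mathbf{q}(\cdot,A) \ge \bar{\mathbf{q}}$ (since global extinction implies local extinction in any set), on the fact that $\bar{\mathbf{q}}$ is a fixed point of $G$, and on the recursion $\mathbf{q}_n = G(\mathbf{q}_{n-1})$ together with the monotone convergence $\mathbf{q}_n(\cdot,A) \uparrow \mathbf{q}(\cdot,A)$ of~\eqref{eq:extprobab}.

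The equivalence $(1) \Leftrightarrow (4)$ is essentially a rewriting of the definition of strong local survival: for $x$ with $\bar{\mathbf{q}}(x) < 1$, the identity $\mathbf{q}(x,A) = \bar{\mathbf{q}}(x)$ is by definition strong local survival in $A$ from $x$, while for $x$ with $\bar{\mathbf{q}}(x) = 1$ the a priori bound forces $\mathbf{q}(x,A) = 1$ and the ``either'' clause in (4) is automatic. Next, $(4) \Rightarrow (3)$ is immediate, since local survival in $A$ entails at least one visit to $A$. For $(3) \Leftrightarrow (5)$, observe that the event in (5) is $\{\text{global survival}\} \cap \{\text{never visit } A\}$, and it has probability zero exactly when either $\bar{\mathbf{q}}(x) = 1$ (in which case conditioning on global survival is vacuous) or $\Prob^{\delta_x}(\text{visit } A \mid \text{global survival}) = 1$, which is precisely (3). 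Finally, $(5) \Rightarrow (2)$ is the decomposition
\[
\mathbf{q}_0(x,A) \;=\; \Prob^{\delta_x}(\text{never visit } A,\, \text{global extinction}) + \Prob^{\delta_x}(\text{never visit } A,\, \text{global survival}) \;\le\; \bar{\mathbf{q}}(x) + 0,
\]
the vanishing second summand being exactly what (5) asserts.

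The decisive step is $(2) \Rightarrow (1)$, which carries the real content of the theorem. Since $G$ is nondecreasing on $[0,1]^X$ and $G(\bar{\mathbf{q}}) = \bar{\mathbf{q}}$, the hypothesis $\mathbf{q}_0(\cdot,A) \le \bar{\mathbf{q}}$ propagates inductively along $\mathbf{q}_n = G(\mathbf{q}_{n-1})$ to give $\mathbf{q}_n(\cdot,A) \le \bar{\mathbf{q}}$ for every $n \in \N$; passing to the monotone limit yields $\mathbf{q}(\cdot,A) \le \bar{\mathbf{q}}$, and combining with the reverse a priori inequality delivers $(1)$. This is the only step requiring more than direct manipulation of events, and its essence is that the one-step information ``never visiting $A$ is no more likely than global extinction'' propagates, via monotonicity of $G$, all the way to the asymptotic identity $\mathbf{q}(\cdot,A) = \bar{\mathbf{q}}$ that characterizes strong local survival; this monotone iteration is also the bridge that converts ``visiting $A$ at least once'' in (3) into ``surviving locally in $A$'' in (4).
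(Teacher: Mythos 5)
Your proof is correct, but it is organized differently from the paper's. The paper does not reprove the equivalence of $(1)$--$(4)$: it cites \cite[Theorem 3.3]{cf:BZ14-SLS} for that block and only adds the two implications involving the new assertion $(5)$, namely $(5)\Rightarrow(2)$ (by the same two-term decomposition of $\mathbf{q}_0(x,A)$ over global extinction and global survival that you use) and $(1)\Rightarrow(5)$, the latter via the increasing events $A_n:=$ ``visit $A$ at most $n$ times'', whose union is the local-extinction event in $A$, so that $\mathbf{q}(x,A)=\bar{\mathbf{q}}(x)$ forces $\pr^{\delta_x}(A_n\setminus GE)=0$ for every $n$ and in particular for $n=0$. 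You instead close a single self-contained cycle $(1)\Rightarrow(4)\Rightarrow(3)\Rightarrow(5)\Rightarrow(2)\Rightarrow(1)$: your passage from $(1)$ to $(5)$ goes through the conditional-probability reformulations $(4)$ and $(3)$ rather than through the events $A_n$, which is an equivalent bookkeeping of the same observation (the event ``finitely many visits to $A$'' minus global extinction is null, hence so is ``no visits to $A$'' minus global extinction), and your $(2)\Rightarrow(1)$ supplies explicitly the monotone iteration $\mathbf{q}_n(\cdot,A)=G(\mathbf{q}_{n-1}(\cdot,A))\le G(\bar{\mathbf{q}})=\bar{\mathbf{q}}$, followed by the monotone limit and the a priori bound $\mathbf{q}(\cdot,A)\ge\bar{\mathbf{q}}$, which is precisely the content the paper delegates to the cited reference. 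What your version buys is self-containedness: the reader sees exactly where the fixed-point property of $\bar{\mathbf{q}}$ and the monotonicity of $G$ enter. What the paper's version buys is brevity, isolating the only genuinely new content here (the role of assertion $(5)$) relative to the previously published equivalence.
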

This theorem implies that 
if there exists $x \in X$ such that ${\mathbf{q}}(x,A)>\bar {\mathbf{q}}(x)$ 
(that is, there is a positive probability of global survival and
local extinction in $A$ starting from $x$) then there exists $y \in X$ such that
${\mathbf{q}}_0(y,A)>\bar {\mathbf{q}}(y)$ (which implies that 
there is a positive probability that the BRW survives globally starting from $y$ without
ever visiting $A$, clearly $y \not \in A$). Note that, ${\mathbf{q}}_0(x,A)>\bar {\mathbf{q}}(x)$ implies ${\mathbf{q}}(x,A)>\bar {\mathbf{q}}(x)$ but 
the converse is not true.
Hence we have the following dichotomy: for every fixed nonempty $A$, either ${\mathbf{q}}(\cdot,A)=\bar {\mathbf{q}}(\cdot)$ or 
there is $x \in X\setminus A$ such that there is a positive probability of global survival starting from $x$ without ever visiting $A$.

We note that there is no \textit{a priori} order between the events $A_0:=$``never visit $A$'' and $GE:=$``global 
extinction''. Nevertheless, Theorem~\ref{th:strongconditioned} tells us that if $\mathbf{q}_0(\cdot,A) \le \bar{\mathbf{q}}(\cdot)$
then $\pr^x(A_0 \setminus GE)=0$ for all $x \in X$ (the converse is trivial).

From Theorem~\ref{th:strongconditioned}, which is stated for a single BRW, we derive Theorem~\ref{th:modifiedBRW} and
its Corollaries~\ref{cor:pureweak-nonstrong-discrete} and \ref{cor:pureweak-nonstrong} which give us information about the behaviour
of a BRW after some modifications.

\begin{teo}\label{th:modifiedBRW}
Consider two BRWs  $(X,\mu)$ and $(X,\nu)$. Suppose that  $A \subseteq X$ is a nonempty set
 such that $\mu_x=\nu_x$ for all $x \not \in A$.
 \begin{enumerate}
  \item If we denote by ${\mathbf{q}}^\mu$ and ${\mathbf{q}}^\nu$ the extinction probabilities related to
$(X,\mu)$ and $(X,\nu)$ respectively then we have that
${\mathbf{q}}_0^\mu(x,A)={\mathbf{q}}_0^\nu(x,A)$ for all $x \in X$ and 
\[
 {\mathbf{q}}^\mu(\cdot,A)=\bar {\mathbf{q}}^\mu(\cdot) \Longleftrightarrow
 {\mathbf{q}}^\nu(\cdot,A)=\bar {\mathbf{q}}^\nu(\cdot).
\]
\item If $(X,\mu)$ is irreducible and $B, C \subseteq X$ are two nonempty sets such that $B$ is finite then 
\[
\begin{split}
 {\mathbf{q}}^\mu(\cdot,B)=\bar {\mathbf{q}}^\mu(\cdot) &\Longrightarrow
 {\mathbf{q}}^\mu(\cdot,C)=\bar {\mathbf{q}}^\mu(\cdot). \\
\end{split}
\]
%
 \end{enumerate}
\end{teo}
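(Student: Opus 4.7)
The plan is to reduce part (1) to Theorem~\ref{th:strongconditioned} together with a coupling of the two BRWs on trajectories that avoid $A$, and to derive part (2) from the identities for $\mathbf{q}(x,\cdot)$ in the irreducible case that are collected in Remark~\ref{rem:irreducible}.

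For the first assertion of part (1), the key observation is that the event ``$A$ is never visited'' uses only the reproduction laws $\{\mu_y\}_{y\notin A}$. Concretely, both BRWs can be built on a common probability space by attaching to each node $v$ of the abstract genealogical tree an independent uniform random variable $U_v$ and turning $U_v$ into an offspring configuration through a deterministic function of $\mu_y$ (for the first BRW) or $\nu_y$ (for the second), where $y$ is the location of the particle at $v$. Since $\mu_y=\nu_y$ whenever $y\notin A$, the two processes produce identical offspring at every node whose particle sits outside $A$; in particular they coincide pathwise on the event ``$A$ is never visited''. Consequently this event has the same probability under the two processes, which yields $\mathbf{q}_0^\mu(x,A)=\mathbf{q}_0^\nu(x,A)$ for $x\notin A$, the case $x\in A$ being trivial.

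The same coupling shows that the joint event $\{\text{global survival}\}\cap\{\text{never visit }A\}$ has identical probability in the two BRWs for every starting point. Applying the equivalence (1)$\Leftrightarrow$(5) of Theorem~\ref{th:strongconditioned} separately to each BRW then yields the desired biconditional. A shortcut via (1)$\Leftrightarrow$(2) combined with $\mathbf{q}_0^\mu=\mathbf{q}_0^\nu$ would \emph{not} close the argument, because $\bar{\mathbf{q}}^\mu$ and $\bar{\mathbf{q}}^\nu$ may differ in general; condition (5), by contrast, refers to an event measurable with respect to reproductions outside $A$ alone, which is precisely what the coupling exploits.

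For part (2), Remark~\ref{rem:irreducible} and the equivalence-class identity recalled just above it in Section~\ref{sec:Q&A} give, in the irreducible case, $\mathbf{q}^\mu(x,B)=\mathbf{q}^\mu(x,x)$ for every nonempty finite $B$, and $\mathbf{q}^\mu(x,y)=\mathbf{q}^\mu(x,y')$ for all $y,y'\in X$. Combined with the hypothesis $\mathbf{q}^\mu(\cdot,B)=\bar{\mathbf{q}}^\mu(\cdot)$, these yield $\mathbf{q}^\mu(x,y)=\bar{\mathbf{q}}^\mu(x)$ for every $x,y\in X$. For an arbitrary nonempty $C\subseteq X$, pick $y_0\in C$; from $\{y_0\}\subseteq C$ and the antitonicity of $\mathbf{q}(\cdot,\cdot)$ in its second argument (also noted in Section~\ref{sec:Q&A}), we get $\bar{\mathbf{q}}^\mu(x)\leq \mathbf{q}^\mu(x,C)\leq \mathbf{q}^\mu(x,\{y_0\})=\bar{\mathbf{q}}^\mu(x)$, hence equality. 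The main technical delicacy is in making the coupling of part (1) rigorous; a breadth-first, particle-by-particle construction driven by independent uniform variables at each genealogical node suffices, but this is the one step that requires extra attention.
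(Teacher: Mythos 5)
Your proposal is correct and follows essentially the same route as the paper: part (1) rests on the observation that the two processes agree until they first enter $A$ (your explicit coupling is just a more formal rendering of this) combined with the equivalence $(1)\Leftrightarrow(5)$ of Theorem~\ref{th:strongconditioned}, and part (2) is the same reduction via Remark~\ref{rem:irreducible} and the monotonicity $\mathbf{q}(\cdot,C)\le\mathbf{q}(\cdot,\{y_0\})$ for $y_0\in C$. Your remark that the shortcut through condition $(2)$ of Theorem~\ref{th:strongconditioned} would not suffice, since $\bar{\mathbf{q}}^\mu$ and $\bar{\mathbf{q}}^\nu$ may differ, is a correct and worthwhile observation.
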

As a consequence we have the following corollary.

\begin{cor}\label{cor:pureweak-nonstrong-discrete}
 Consider two BRWs  $(X,\mu)$ and $(X,\nu)$. Suppose that  $A \subseteq X$ is a nonempty set
 such that $\mu_x=\nu_x$ for all $x \not \in A$. 
 \begin{enumerate}
  \item Suppose that $(X,\mu)$ dies out locally in $A$ from all $x \in X$
 and $(X,\nu)$ survives globally from all $x \in X$; then
 \[
  \bar{\mathbf{q}}^\mu(x)=1\textrm{ for all }x \in X
  \Longleftrightarrow
   \textrm{strong local survival for } (X,\nu) \textrm{ at }A\textrm{ from all }x\in X.
  \]
  \item If $(X,\mu)$ dies out globally from all $x \in X$
 and $(X,\nu)$ survives globally from all $x \in X$ then there is strong local survival for $(X,\nu)$ in $A$
 from all $x \in X$.
  \end{enumerate}
\end{cor}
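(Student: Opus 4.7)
The plan is to derive both parts directly from Theorem~\ref{th:modifiedBRW}(1), reading the hypotheses as statements about when $\mathbf{q}(\cdot,A)=\bar{\mathbf{q}}(\cdot)$ collapses to a trivial identity.

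First I would unpack the definitions in terms of extinction probability vectors. By Definition~\ref{def:survival}, strong local survival in $A$ for $(X,\nu)$ from every $x$ is equivalent to $\mathbf{q}^\nu(\cdot,A)=\bar{\mathbf{q}}^\nu(\cdot)$ together with $\bar{\mathbf{q}}^\nu(\cdot)<\mathbf{1}$; the latter inequality is already guaranteed by the assumption that $(X,\nu)$ survives globally from every $x$. On the other side, the hypothesis that $(X,\mu)$ dies out locally in $A$ from every $x$ means precisely $\mathbf{q}^\mu(\cdot,A)=\mathbf{1}$. Therefore, under the standing assumptions of part~(1), the equality $\mathbf{q}^\mu(\cdot,A)=\bar{\mathbf{q}}^\mu(\cdot)$ is equivalent to $\bar{\mathbf{q}}^\mu(\cdot)=\mathbf{1}$, i.e.\ to global extinction for $(X,\mu)$ from every $x$.

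Now I apply Theorem~\ref{th:modifiedBRW}(1), which — since $\mu_x=\nu_x$ outside $A$ — gives the equivalence
\[
\mathbf{q}^\mu(\cdot,A)=\bar{\mathbf{q}}^\mu(\cdot)\iff \mathbf{q}^\nu(\cdot,A)=\bar{\mathbf{q}}^\nu(\cdot).
\]
Combining this with the two reformulations above yields exactly part~(1): global extinction for $(X,\mu)$ is equivalent to strong local survival in $A$ for $(X,\nu)$ from every $x$. For part~(2), note that if $(X,\mu)$ dies out globally from every $x$, i.e.\ $\bar{\mathbf{q}}^\mu(\cdot)=\mathbf{1}$, then in particular $\mathbf{q}^\mu(\cdot,A)=\mathbf{1}$, so $(X,\mu)$ dies out locally in $A$; the hypothesis of part~(1) is met and the left-hand condition holds, so part~(1) delivers strong local survival for $(X,\nu)$ in $A$ from every $x$.

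No real obstacle is expected: the whole proof is a bookkeeping exercise translating the hypotheses into the form on which Theorem~\ref{th:modifiedBRW}(1) operates. The only subtlety worth flagging explicitly is that the strict inequality $\bar{\mathbf{q}}^\nu(\cdot)<\mathbf{1}$ appearing in the definition of strong local survival is supplied by the global-survival hypothesis on $(X,\nu)$, and that part~(2) is genuinely a corollary of part~(1) rather than an independent statement.
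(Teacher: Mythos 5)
Your proof is correct and follows essentially the same route as the paper's: both parts reduce to Theorem~\ref{th:modifiedBRW}(1) after observing that the hypothesis $\mathbf{q}^\mu(\cdot,A)=\mathbf{1}$ turns the equality $\mathbf{q}^\mu(\cdot,A)=\bar{\mathbf{q}}^\mu(\cdot)$ into the statement of global extinction for $(X,\mu)$, and part~(2) is deduced from part~(1) exactly as in the paper.
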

The following corollary describes how a small and local modification can affect the phase diagram of a continuous-time BRW.
\begin{cor}\label{cor:pureweak-nonstrong}
 Let $(X,K)$ and $(X,K^\prime)$ two irreducible continuous-time BRWs such that $k_{xy}=k^\prime_{xy}$ for all $x \in X \setminus A$
 where $A$ is a nonempty, finite set. 
 Then the following are equivalent:
 \begin{enumerate}
  \item $\lambda^\prime_w<\lambda_w$;
  \item $\lambda^\prime_s<\lambda_w$;
\item $\lambda^\prime_w=\lambda^\prime_s<\lambda_w$.
 \end{enumerate}
 Moreover if one of the previous holds, for the BRW $(X,K^\prime)$
 \begin{enumerate}[(i)]
  \item if $\lambda \le \lambda_w^\prime$ there is a.s.~global and local extinction in every nonempty set $B$;
     \item if $\lambda \in (\lambda_w^\prime, \lambda_w)$ there is strong local survival in every nonempty set $B$;
     \item if $\lambda =\lambda_w$ and the $(X,K)$-BRW dies out globally, then there is strong local survival in $B$
     for every nonempty set $B$, otherwise there is non-strong local survival in $B$ for every nonempty finite set $B$;
   \item if $\lambda \in (\lambda_w, \lambda_s]$ (when non empty) there is non-strong local survival in every nonempty finite set $B$;
    \item if $\lambda > \lambda_s$ then local survival is strong (resp.~non-strong) in a nonempty finite set $B$ if and only if 
  the same holds for $(X,K)$.  
 \end{enumerate}
 
\end{cor}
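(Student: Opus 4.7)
The plan is to prove the equivalence (1)--(3) first, then deduce the phase description (i)--(v) by repeated use of Theorem~\ref{th:modifiedBRW} and Corollary~\ref{cor:pureweak-nonstrong-discrete}. Throughout I would exploit the trivial bound $\lambda'_w \le \lambda'_s$ (local survival implies global survival) together with the fact from Theorem~\ref{th:continuoussurv}(1) that $(X,K')$ has local extinction exactly at $\lambda = \lambda'_s$; irreducibility of both BRWs guarantees that ``global survival/extinction from $x$'' is independent of the starting vertex, which is what lets us feed the hypotheses of Corollary~\ref{cor:pureweak-nonstrong-discrete} directly.

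For the equivalence, (3)$\Rightarrow$(2) is tautological and (2)$\Rightarrow$(1) is $\lambda'_w \le \lambda'_s$. To prove (1)$\Rightarrow$(3), I would fix any $\lambda \in (\lambda'_w,\lambda_w)$: then $(X,K)$ dies out globally from every vertex while $(X,K')$ survives globally from every vertex, so Corollary~\ref{cor:pureweak-nonstrong-discrete}(2) applied to $(\mu,\nu)=(K,K')$ yields strong local survival at $A$ for $(X,K')$, hence local survival and $\lambda \ge \lambda'_s$. Letting $\lambda \downarrow \lambda'_w$ gives $\lambda'_s \le \lambda'_w$, and combined with the reverse inequality this is (3).

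For the phase description I would assume (3), so $\lambda'_w = \lambda'_s < \lambda_w \le \lambda_s$, and organise (ii)--(v) around two recurring arguments. \emph{First recipe}: whenever at the chosen $\lambda$ the BRW $(X,K)$ is globally extinct and $(X,K')$ globally surviving, Corollary~\ref{cor:pureweak-nonstrong-discrete}(2) gives strong local survival for $(X,K')$ at $A$, which Theorem~\ref{th:modifiedBRW}(2) extends to every nonempty $B$. \emph{Second recipe}: whenever $(X,K)$ survives globally but has local extinction at $A$, Theorem~\ref{th:modifiedBRW}(1) transfers the failure of strong local survival at $A$ to $(X,K')$, which however survives locally (as $\lambda > \lambda'_s$), so $(X,K')$ exhibits non-strong local survival at $A$, and the contrapositive of Theorem~\ref{th:modifiedBRW}(2) propagates this to every nonempty finite $B$. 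Case (ii) and the dying-out subcase of (iii) use the first recipe; the surviving subcase of (iii) and case (iv) use the second recipe, invoking Theorem~\ref{th:continuoussurv}(1) at $\lambda=\lambda_s$ to secure the local extinction of $(X,K)$ when needed. Finally, case (v) follows from Theorem~\ref{th:modifiedBRW}(1) applied at $A$ (which transfers the strong/non-strong nature between $(X,K)$ and $(X,K')$) combined with Theorem~\ref{th:modifiedBRW}(2) applied to each of them, which expresses this as common behaviour over all nonempty finite subsets.

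The \emph{main obstacle} is case (i), specifically ruling out global survival for $(X,K')$ at $\lambda = \lambda'_w$; local extinction at that value is already given by Theorem~\ref{th:continuoussurv}(1). My plan is a contradiction argument: if $(X,K')$ survived globally at $\lambda = \lambda'_w$, then since $\lambda'_w < \lambda_w$ the BRW $(X,K)$ would still die out globally, so Corollary~\ref{cor:pureweak-nonstrong-discrete}(2) would force strong local survival at $A$ for $(X,K')$ and hence local survival at $\lambda = \lambda'_s$, contradicting Theorem~\ref{th:continuoussurv}(1). This is precisely the point flagged in the introduction: global survival can occur at $\lambda = \lambda_w$ for a general continuous-time BRW (Example~3 of \cite{cf:BZ2}), but the local-modification hypothesis $k_{xy}=k'_{xy}$ off the finite set $A$ rules this out for $(X,K')$.
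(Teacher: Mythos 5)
Your proposal is correct and follows essentially the same route as the paper: the equivalence is obtained by showing that every $\lambda\in(\lambda'_w,\lambda_w)$ forces local survival of $(X,K')$ (hence $\lambda'_s\le\lambda'_w$), and the phase description is organised around the same two transfers via Theorem~\ref{th:modifiedBRW} together with irreducibility to propagate (non-)strong local survival to all finite sets. The only cosmetic differences are that you route some steps through Corollary~\ref{cor:pureweak-nonstrong-discrete}(2) where the paper invokes Theorem~\ref{th:modifiedBRW}(1) directly, and you handle global extinction at $\lambda=\lambda'_w$ by contradiction (using local extinction at $\lambda'_s$ from Theorem~\ref{th:continuoussurv}(1)) where the paper computes $\bar{\mathbf{q}}'=\mathbf{q}'(\cdot,A)=\mathbf{1}$ directly from the same local-extinction fact.
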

We already pointed out that at $\lambda=\lambda_w$, global survival is possible. This cannot 
happen if the process is a finite modification of another BRW, as in Corollary~\ref{cor:pureweak-nonstrong}.
An easy way to modify a BRW $(X,K)$ in order to obtain  $\lambda^\prime_s<\lambda_w$, is to add 
a sufficiently rapid reproduction from $y$ to $y$ (for a fixed $y$).

We now apply Corollary~\ref{cor:pureweak-nonstrong} to the following example (see also \cite[Example 4.2]{cf:BZ14-SLS})
which can be discussed without using cumbersome arguments such as those contained in \cite[Remark 3.2]{cf:BZ}
and \cite[Example 4.1]{cf:BZ14-SLS}.

\begin{exmp}\label{th:nonstrongandstrong}
Consider the edge-breeding continuous-time BRW on the homogeneous
tree $\mathbb{T}_d$ with degree $d\ge 3$; in this case $K$ is the adjacency matrix. 
It is easy to prove (see for instance \cite[Example 4.2]{cf:BZ14-SLS}) that 
$\lambda_w=1/d< 1/2\sqrt{d-1}=\lambda_s$.
If $\lambda \le \lambda_w$ there is global extinction,
if $\lambda > 1/2\sqrt{d-1}$ there is strong local survival (see \cite[Corollary 3.2]{cf:BZ14-SLS})
while 
if $\lambda \in (1/d ,1/2 \sqrt{d-1}]$ the probability
of global survival is positive and
independent of the starting point and the probability of local survival in any finite $A \subseteq X$
is $0$. The phase diagram is shown by Figure~\ref{fig:t3}.

Fix a vertex $y \in \mathbb{T}_d$ and denote by $A$ the singleton $\{y\}$. Let us modify the BRW
by adding a loop at $y$, that is, by considering a new matrix $K^\prime$ where all the entries are the same as 
those of $K$ but $k^\prime_{yy}>d$.
Hence $\lambda^\prime_s\le1/k^\prime_{yy}<1/d=\lambda_w$ and Corollary~\ref{cor:pureweak-nonstrong} applies.
As a result, $\lambda^\prime_s=\lambda^\prime_w$ and we have the following behaviour for
$(\mathbb{T}_d, K^\prime)$ (see Figure~\ref{fig:t3modified}):
if  $\lambda < \lambda_w^\prime$ there is global extinction,
if $\lambda \in (\lambda_w^\prime, 1/d]$ there is strong local survival,
if $\lambda \in (1/d, 1/2\sqrt{d-1}]$ there is non-strong local survival
and if $\lambda > 1/2\sqrt{d-1}$ there is strong local survival again.

 \begin{figure}[H]
 \centering
 \begin{minipage}{0.45\textwidth}
 \centering
  \includegraphics[width=0.95\textwidth]{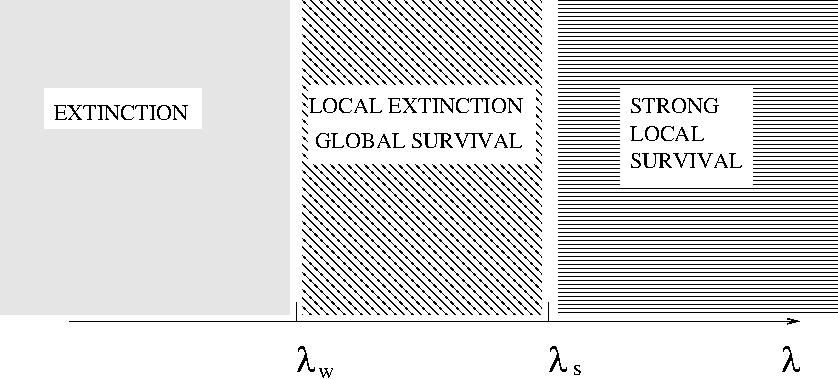}
  \caption{\scriptsize{Phase diagram for $(\mathbb{T}_d, K)$.}}\label{fig:t3}
 \end{minipage}
\begin{minipage}{0.45\textwidth}
\centering
 \includegraphics[width=0.95\textwidth]{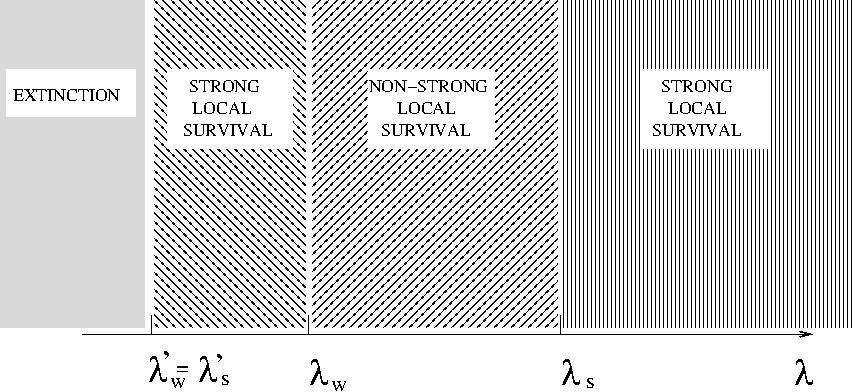}
  \caption{\scriptsize{Phase diagram for $(\mathbb{T}_d, K^\prime)$.}}\label{fig:t3modified}
\end{minipage}
\end{figure}

We note that, as it always happens in a continuous-time BRW, $\mathbf{q}(\cdot,A)$ depends on $\lambda$
since a continuous-time BRW actually is a family of processes indexed by $\lambda$.
The function $\lambda \mapsto \mathbf{q}(\cdot,A)$ does not need to be continuous.
Consider, for instance, the above edge-breeding BRW $(\mathbb{T}_d, K)$; if we
look for the global extinction probability vector it is easy to show, by using equation~\eqref{eq:Gcontinuous} and the
equality $\bar {\mathbf{q}}=\lim_{n \to \infty} G^{(n)}(\mathbf{0})$, that $\bar {\mathbf{q}}(x)=\min(1, (d\lambda)^{-1})$
which is a nice continuous function. On the other hand, if we consider ${\mathbf{q}}(x,x)$ (where $x \in \mathbb{T}_d$)
then it equals $\mathbf{1}$ in the interval $(0, 1/2\sqrt{d-1}]$ and $(d\lambda)^{-1} \cdot \mathbf{1}$ in the interval $(1/2\sqrt{d-1},
+\infty)$; thus there is a discontinuity at $1/2\sqrt{d-1}$.
\end{exmp}

\section{Proofs}
\label{sec:proofs}

 \begin{proof}[Details on Remark~\ref{rem:irreducible}]
 If the BRW is irreducible we have ${\mathbf{q}}(v,h)= {\mathbf{q}}(v,v)$ for all $v,h \in X$, which implies
${\mathbf{q}}(v,A)={\mathbf{q}}(v,v)={\mathbf{q}}(v,B)$ for all $v \in X$ and finite, nonempty sets $A$ and $B$. Indeed
if the process visits infinitely many times $A$ starting
from $v$ then
it visits infinitely many times at least a vertex $h \in A$ and, by irreducibility, it visits infinitely many times 
$v$. Similarly, if the process visits infinitely many times $v$ starting from $v$ then it visits infinitely many times
any vertex $h \in A$.

If $\bar {\mathbf{q}}(x)=1$ then ${\mathbf{q}}(y,B)=1$ for all $y \in X$ and $B \subseteq X$ and there is nothing to prove.
 Suppose that $\bar {\mathbf{q}}(x)={\mathbf{q}}(x,A)<1$ and, by contradiction, $\bar {\mathbf{q}}(y)<{\mathbf{q}}(y,B)$ 
 for some $x,y \in X$ and $A,B \subseteq X$
finite. 
We know that there is a positive probability that the process, starting from $x$
has at least one descendant at $y$. There is also a positive probability that all the particles (except one at $y$)
die and the progeny of the surviving particle survives globally but not locally in $A$. Thus,
there is a positive probability, starting from $x$, of surviving globally but not locally in $A$ and this is a contradiction.
Hence $\bar {\mathbf{q}}(y) = {\mathbf{q}}(y,A)$ for all $y \in X$. But we proved above that, in the irreducible case,  
${\mathbf{q}}(v,A)={\mathbf{q}}(v,B)$ for all $v \in X$ and all finite nonempty subsets $A$ and $B$, whence
$\bar {\mathbf{q}}(y) = {\mathbf{q}}(y,A)$ for all $y \in X$ and every finite nonempty subset  $B$.
If $B$ is infinite and $z \in B$ then $\bar {\mathbf{q}}(y)={\mathbf{q}}(y,z) \ge {\mathbf{q}}(y,B) \ge \bar {\mathbf{q}}(y)$
for all $y \in X$.
\end{proof}

\begin{proof}[Proof of Theorem~\ref{th:strongconditioned}]
 The equivalence between $(1)$, $(2)$, $(3)$ and $(4)$ was already proven in \cite[Theorem 3.3]{cf:BZ14-SLS}. Clearly,
 $\pr^x(A_0 \setminus GE)=0$ implies ${\mathbf{q}}_0(x,A)\le \bar{\mathbf{q}}(x)$, hence $(5) \Longrightarrow (2)$. We prove now
 that $(1) \Longrightarrow (5)$. Indeed, define $A_n:=$``visit $A$ at most $n$ times''. Hence, $A_{n+1} \supseteq
 A_n$ and $\bigcup_{n \in \mathbb{N}} A_n \supseteq GE$. Note that ${\mathbf{q}}(x,A)=\pr^x \big ( \bigcup_{n \in \mathbb{N}} A_n \big )$
 and $\bar{\mathbf{q}}(x)=\pr^x(GE)$. If ${\mathbf{q}}(x,A)= \bar{\mathbf{q}}(x)$ then 
 $\pr^x \big ( \bigcup_{n \in \mathbb{N}} A_n \setminus GE\big )=0$ which is equivalent to 
 $\pr^x ( A_0 \setminus GE\big )=0$ for all $n \in \N$.
\end{proof}

\begin{proof}[Proof of Theorem~\ref{th:modifiedBRW}]
 \begin{enumerate}
 \item
We note that $(X,\mu)$ and $(X,\nu)$ have the same behaviour until they first hit $A$,
hence ${\mathbf{q}}_0^\mu(x,A)={\mathbf{q}}_0^\nu(x,A)$ for all $x \not \in A$. If $x \in A$ then clearly
${\mathbf{q}}_0^\mu(x,A)=0={\mathbf{q}}_0^\nu(x,A)$.

Suppose now that ${\mathbf{q}}^\mu(\cdot,A) \not = \bar {\mathbf{q}}^\mu(\cdot)$. Hence, according to Theorem~\ref{th:strongconditioned}
(see comments after its statement),
there exists  $x \in X\setminus A$ such that there is a positive probability of survival starting from $x$ without ever visiting $A$.
Since the two processes have the same behaviour until they first hit $A$, the same holds for $(X,\nu)$ and this implies that 
${\mathbf{q}}^\nu(x,A)>\bar {\mathbf{q}}^\nu(x)$; thus
${\mathbf{q}}^\nu(\cdot,A) \not =\bar {\mathbf{q}}^\nu(\cdot)$.

\item
By Remark~\ref{rem:irreducible}, when $B$ and $C$ are finite nonempty subsets, 
${\mathbf{q}}^\mu(\cdot,B)={\mathbf{q}}^\mu(\cdot,C)$, whence the implication is trivial.

Moreover, recall that $B \subseteq C$ implies ${\mathbf{q}}^\mu(\cdot,B) \ge {\mathbf{q}}^\mu(\cdot,C)$.
hence, if $C$ is infinite and $z \in C$ then, for all $y \in X$, Remark~\ref{rem:irreducible} yields 
\[
\bar {\mathbf{q}}^\mu(y)={\mathbf{q}}^\mu(y,B)={\mathbf{q}}^\mu(y,z) \ge {\mathbf{q}}^\mu(y,C) \ge \bar {\mathbf{q}}^\mu(y).
\]
\end{enumerate}
\end{proof}

\begin{proof}[Proof of Corollary~\ref{cor:pureweak-nonstrong-discrete}]
\begin{enumerate}
 \item
 According to the hypotheses $\mathbf{q}^\mu(\cdot,A)=\mathbf{1}>\bar{\mathbf{q}}^\nu(\cdot)$.
 Hence if $\bar{\mathbf{q}}^\mu(\cdot)=\mathbf{1}=\mathbf{q}^\mu(\cdot,A)$ then, according to
 Theorem~\ref{th:modifiedBRW}(1), $\mathbf{q}^\nu(\cdot,A)=\bar{\mathbf{q}}^\nu(\cdot)<\mathbf{1}$,
 that is, there is strong local survival for $(X,\nu)$ in $A$ from every $x \in X$.
 Conversely, $\mathbf{q}^\nu(\cdot,A)=\bar{\mathbf{q}}^\nu(\cdot)<\mathbf{1}$ implies, by Theorem~\ref{th:modifiedBRW}(1),
 $\bar{\mathbf{q}}^\mu(\cdot)=\mathbf{q}^\mu(\cdot,A)=\mathbf{1}$, thus global extinction from every $x \in X$.
\item If $(X,\mu)$ dies out globally from all $x \in X$ then it dies out locally in $A$ from all $x \in X$ hence, from the previous
part, there is  strong local survival for $(X,\nu)$ in $A$ from every $x \in X$.
\end{enumerate}
 \end{proof}

\begin{proof}[Proof of Corollary~\ref{cor:pureweak-nonstrong}]
Observe that the discrete-time counterparts of these continuous-time BRWs satisfy the hypotheses
of Theorem~\ref{th:modifiedBRW}, namely, their offspring distribution are the same outside $A$.
 
 Clearly 
 $(2) \Longrightarrow (1)$ and $(3) \Longrightarrow (2)$.
 We just need to prove that $(1) \Longrightarrow (3)$; more precisely, we prove that 
 $\lambda^\prime_w<\lambda_w \Longrightarrow \lambda^\prime_w=\lambda^\prime_s$. Take $\lambda \in (\lambda_w^\prime, \lambda_w)$;
 the $\lambda$-$(X,K^\prime)$ BRW survives globally, hence $\bar{\mathbf{q}}^\prime < \mathbf{1}$.
 On the other hand,  $\mathbf{1}=\bar{\mathbf{q}}=\mathbf{q}(\cdot,A)$ whence, according
 to Theorem~\ref{th:modifiedBRW}(1), $\bar{\mathbf{q}}^\prime=\mathbf{q}^\prime(\cdot,A)$ which implies
 $\mathbf{q}^\prime(\cdot,A)< \mathbf{1}$. If the $\lambda$-$(X,K^\prime)$ BRW survives locally in
 the finite set $A$ it means that it survives locally at a vertex $x \in A$ ($\Longleftrightarrow$ at every vertex, since
 the process is irreducible). This implies $\lambda \ge \lambda_s^\prime$; thus $\lambda_s^\prime=\lambda_w^\prime$.
 
 Note that in the discrete-time counterpart of a continuous-time BRW 
 every particle at every vertex has a positive probability of dying without
 breeding; hence by Remark~\ref{rem:irreducible} strong local survival is a common property of all
 starting vertices.
 
 We consider the following disjoint intervals for $\lambda$.
 \begin{enumerate}[(i)]
  \item Suppose that $\lambda < \lambda_w^\prime$; by definition there is global, hence local, extinction.
  If $\lambda = \lambda_w^\prime$ then, 
  according to \cite[Theorem 4.7]{cf:BZ2} (see also 
  \cite[Theorem 3.5 and Section 4.2]{cf:BZ}), since $\lambda=\lambda_w^\prime=\lambda_s^\prime$ then
  the $\lambda$-$(X,K^\prime)$ BRW dies out locally (at any finite set $C$),
  hence $\mathbf{q}^\prime(\cdot,C)= \mathbf{1}$ (clearly, being $\lambda < \lambda_w$, 
  $\mathbf{q}(\cdot,B)=\bar{\mathbf{q}}= \mathbf{1}$ for all $B\subseteq X$), using Theorem~\ref{th:modifiedBRW}(1),
  \[
   \mathbf{q}(\cdot,A)=\bar{\mathbf{q}} \Longrightarrow \bar{\mathbf{q}}=\mathbf{q}^\prime(\cdot,A)=\mathbf{1}.
  \]
  Since $\bar{\mathbf{q}} \le \mathbf{q}^\prime(\cdot,B)$ for all $B$, we have $\mathbf{q}^\prime(\cdot,B)=\mathbf{1}$.
 \item $\lambda \in (\lambda_w^\prime, \lambda_w)$. By definition, since $\lambda_w^\prime=\lambda_s^\prime$,
 there is global and local survival for the $\lambda$-$(X,K^\prime)$ BRW. This implies that 
 $\bar{\mathbf{q}}^\prime \le \mathbf{q}^\prime(\cdot,B) < \mathbf{1}$ for every set $B$. On the other hand,
 there is global and local extinction for the $\lambda$-$(X,K)$ BRW which implies
 $\bar{\mathbf{q}}= \mathbf{q}(\cdot,B)= \mathbf{1}$. Again, according to Theorem~\ref{th:modifiedBRW}(1),
 $\bar{\mathbf{q}}^\prime = \mathbf{q}^\prime(\cdot,A) < \mathbf{1}$, that is, strong local survival in $A$.
 Irreducibility implies $\bar{\mathbf{q}}^\prime = \mathbf{q}^\prime(\cdot,B)$ for every (finite or infinite) set $B$.
 \item Clearly, since $\lambda =\lambda_w \le \lambda_s$, we have $\mathbf{q}(\cdot,B)= \mathbf{1}$ for all finite subsets $B$.
  Hence
  \[
   \lambda-(X,K) \textrm{ survives globally} \Longleftrightarrow \bar{\mathbf{q}}< \mathbf{q}(\cdot,A)
  \]
that is, according to Theorem~\ref{th:modifiedBRW}(1), if and only if $\bar{\mathbf{q}}^\prime < \mathbf{q}^\prime(\cdot,A)$.
This, again, implies $\bar{\mathbf{q}}^\prime < \mathbf{q}^\prime(\cdot,B)$ for every nonempty finite subset $B$.
If, on the other hand, $\lambda-(X,K)$  dies out globally, then $\bar{\mathbf{q}}^\prime = \mathbf{q}^\prime(\cdot,A)$ and
  $\bar{\mathbf{q}}^\prime = \mathbf{q}^\prime(\cdot,B)$ for every nonempty subset $B$.
  \item $\lambda \in (\lambda_w, \lambda_s]$ (we suppose that the interval is nonempty, otherwise there is nothing to prove). 
  Here we have $\bar{\mathbf{q}}< \mathbf{1}=\mathbf{q}(\cdot,B)$ for every finite subset $B$.
  Theorem~\ref{th:modifiedBRW}(1) yields $\bar{\mathbf{q}}^\prime < \mathbf{q}^\prime(\cdot,A)< \mathbf{1}$  and, by irreducibility,
  $\bar{\mathbf{q}}^\prime < \mathbf{q}^\prime(\cdot,B)< \mathbf{1}$ for every finite, nonempty subset $B$.
  \item $\lambda > \lambda_s$. Now, $\mathbf{q}(\cdot,B)< \mathbf{1}$ and $\mathbf{q}^\prime(\cdot,B)< \mathbf{1}$
  for every nonempty $B \subset X$.
  Again, by Theorem~\ref{th:modifiedBRW}(1), we have
  \[
 {\mathbf{q}}^\mu(\cdot,A)=\bar {\mathbf{q}}^\mu(\cdot) \Longleftrightarrow
 {\mathbf{q}}^\nu(\cdot,A)=\bar {\mathbf{q}}^\nu(\cdot).
\]
  If $B$ is finite then Theorem~\ref{th:modifiedBRW}(2) yields the conclusion.
 \end{enumerate}
\end{proof}

\begin{proof}[Details on Example~\ref{exmp:irreduciblemorefixedpoints}]
We are considering the BRW on $\N$ where every particle at $n$ has two children at $n+1$ with probability $p-\varepsilon$,
one child at $\max(0,n-1)$ with probability $\varepsilon$
and no children with probability $1-p$. We fixed $p<1/\sqrt{2}$ and $\varepsilon(p-\varepsilon) \le 1/8$.
We know that $M_w=2p-\varepsilon>1$ and now we compute $M_s$.
%
More precisely, we prove that, 
given $\varepsilon(p-\varepsilon) \le 1/8$, we have local extinction, that is, $M_s\le 1$.
Indeed, $1/M_s=\max\{z \ge 0\colon \Phi(x,x|z) \le 1\}$ where 
$\Phi(x,y|z):=\sum_{n=1}^\infty\phi^n(x,y)z^n$ and $\phi^n(x,y)$
is the expected progeny at $y$ of a particle which is at $x$ at time 0, along an $n$-step 
reproduction trail which hits $y$ for the first time at step $n$ (see \cite[Section~2.2]{cf:BZ2}
and \cite[Sec.~3.2]{cf:Z1}). It is easy to see that $\Phi(0,0|z)=\varepsilon z+2(p-\varepsilon)z\Phi(1,0|z)$,
$\Phi(1,0|z)=\varepsilon z+{2}(p-\varepsilon)z\Phi(2,0|z)$
and $\Phi(2,0|z)=(\Phi(1,0|z))^2$. Solving the quadratic equation in $\Phi(1,0|z)$ and choosing the solution
which has a finite limit as $z\to0$, we get that
\[ \begin{split}
 \Phi(1,0|z)&=\frac{1-\sqrt{1-{8}\varepsilon z^2(p-\varepsilon)}}{{4}z(p-\varepsilon)},\\
 \Phi(0,0|z)&=\varepsilon z+\frac{1-\sqrt{1-{8}\varepsilon z^2(p-\varepsilon)}}{{2}}.\\
 \end{split}
\]
Clearly $M_s \le 1$ if and only if $\Phi(x,x|1) \le 1$ which, in turn, is equivalent to $8\varepsilon(p-\varepsilon) \le 1$ and 
$2 \varepsilon -1\le \sqrt{1-8 \varepsilon (p-\varepsilon)}$. Note that $2p-\varepsilon >1$ and $p<1/\sqrt{2}$, hence
$2\varepsilon-1 < 4p-3<2\sqrt{2}-3<0$; thus $\Phi(x,x|1) < 1$ and $M_s \le 1$.


Let us compute the set of fixed points; we prove 
the existence of an uncountable number
of fixed points. 
Clearly if 
$\mathbf{z}(0) =(1-p)/(p-\varepsilon)$ (resp.~$\mathbf{z}(0) =1$) we have $\mathbf{z}=(1-p)/(p-\varepsilon) \cdot \mathbf{1}$ 
(resp.~$\mathbf{z}=\mathbf{1}$).
This gives the two constant fixed points (the smallest one $\bar{\mathbf{q}}$ and the largest one): 
observe that these constants are the solutions
of $J(x)=0$, where $J(x):= (p-\varepsilon) x^2-(1-\varepsilon)x+1-p$. Hence $J(x)<0$ for all 
$x \in \big ((1-p)/(p-\varepsilon),1 \big)$.
Any other fixed point must satisfy $\bar{\mathbf{q}} < \mathbf{z} < \mathbf{1}$, thus 
$\mathbf{z}(0) \in \big ((1-p)/(p-\varepsilon),1 \big )$.
We prove by induction that, whenever we fix $\mathbf{z}(0) \in \big ((1-p)/(p-\varepsilon),1 \big )$, then
\[
 (P_n)= \ \begin{cases}
          \mathbf{z}(n)>\mathbf{z}(n-1)\\
          \mathbf{z}(n) \in \big ((1-p)/(p-\varepsilon),1 \big )\\
          1-\mathbf{z}(n) > \frac{1-\mathbf{z}(n-1)}{2p}
         \end{cases}
\]
hold for every $n \ge 1$. 
This will prove that any suitable choice of $\mathbf{z}(0)$ gives a fixed point.
The previous conditions are clearly redundant but it is easier to proceed like this.
Using the equation, $G(\mathbf{z})=\mathbf{z}$, we have
\begin{equation}\label{eq:1}
 \begin{cases}
  \mathbf{z}(1)=\sqrt{\frac{(1-\varepsilon)\mathbf{z}(0)-(1-p)}{p-\varepsilon}}& \\
  \mathbf{z}(n+1)=\sqrt{\frac{\mathbf{z}(n)-\varepsilon\mathbf{z}(n-1)-(1-p)}{p-\varepsilon}}& \textrm{if }n \ge 1.\\
 \end{cases}
\end{equation}
Since $\mathbf{z}(0) \in \big ((1-p)/(p-\varepsilon),1 \big )$ and 
$\mathbf{z}(1)^2-\mathbf{z}(0)^2=-J(\mathbf{z}(0))/(p-\varepsilon) >0$ then
$\mathbf{z}(1)>\mathbf{z}(0)>(1-p)/(p-\varepsilon)$. 
Clearly $\mathbf{z}(1)=\sqrt{(\big (1-\varepsilon)\mathbf{z}(0)-(1-p \big ))/(p-\varepsilon)}<
\sqrt{\big (1-\varepsilon-(1-p)\big )/(p-\varepsilon})<1$.
Moreover, using the previous inequality, 
\[
 \begin{split}
  1-\mathbf{z}(1)&=\frac{1-\frac{(1-\varepsilon)\mathbf{z}(0)-(1-p)}{p-\varepsilon}}
  {1+\sqrt{\frac{(1-\varepsilon)\mathbf{z}(0)-(1-p)}{p-\varepsilon}}}
  = \frac{1-\frac{(1-\varepsilon)\mathbf{z}(0)-(1-p)}{p-\varepsilon}}
  {1+\mathbf{z}(1)}
  > \frac{1-\mathbf{z}(0)}{2} \cdot \frac{1-\varepsilon}{p-\varepsilon}
  > \frac{1-\mathbf{z}(0)}{2p}
 \end{split}
\]
thus $(P_1)$ holds.

Let us prove that $(P_n) \Longrightarrow (P_{n+1})$. Using $\mathbf{z}(n)>\mathbf{z}(n-1)$ we have
$\mathbf{z}(n+1)^2-\mathbf{z}(n)^2>(\mathbf{z}(n)-\varepsilon\mathbf{z}(n)-(1-p))/(p-\varepsilon)-\mathbf{z}(n)^2=
-J(\mathbf{z}(n))/(p-\varepsilon) >0$ where the last inequality comes from $\mathbf{z}(n) \in \big ((1-p)/(p-\varepsilon),1 \big )$.
Hence $\mathbf{z}(n+1)>\mathbf{z}(n)>(1-p)/(p-\varepsilon)$. On the other hand, using $\mathbf{z}(n)<1$
and $1-\mathbf{z}(n-1) \le 2p(1-\mathbf{z}(n))$, 
\[
 \begin{split}
  1-\mathbf{z}(n+1)&=\frac{1-\frac{\mathbf{z}(n)-\varepsilon\mathbf{z}(n-1)-(1-p)}{p-\varepsilon}}
  {1+\sqrt{\frac{\mathbf{z}(n)-\varepsilon\mathbf{z}(n-1)-(1-p)}{p-\varepsilon}}}
= \frac{1-\mathbf{z}(n)-\varepsilon(1-\mathbf{z}(n-1))}
  {(p-\varepsilon)(1+\mathbf{z}(n+1))}\\
  &>  (1-\mathbf{z}(n))\frac{1-2p \varepsilon}{(p-\varepsilon)(1+\mathbf{z}(n+1))}=(\$)\\
   \end{split}
\]
which implies $\mathbf{z}(n+1)<1$ (since $p-\varepsilon >1-p>0$ whence $1-2p\varepsilon >1-2p^2>0$ whenever 
$p<1/\sqrt2$). Using this last inequality
(and the bound  $p<1/\sqrt2$), we prove the last part of $(P_{n+1})$:
\[
 (\$) > (1-\mathbf{z}(n))\frac{1-2p \varepsilon}{2(p-\varepsilon)}=\frac{1-\mathbf{z}(n)}{2} \Big (
 \frac{\varepsilon(1-2p^2)}{p(p-\varepsilon)}+ \frac{1}{p}
 \Big )>\frac{1-\mathbf{z}(n)}{2p}.
\]
Hence the set of fixed points is uncountable.
\end{proof}

\section*{Acknowledgements}

The authors are grateful to the organizers of the XIX Brazilian Summer School in Probability for the invitation
and financial support. They also acknowledge financial support from INDAM (Istituto Nazionale di Alta Matematica).

\end{document}